\documentclass{article}

\usepackage[margin=1in]{geometry}

\usepackage{amsmath}
\usepackage{amsfonts}
\usepackage{amssymb}    
\usepackage{amsthm}

\usepackage{enumitem}   
\usepackage{booktabs}   
\usepackage{multirow}

\usepackage{graphicx}  
\usepackage{algorithm}  
\usepackage{algorithmic}
\usepackage[numbers,sort&compress]{natbib}

\usepackage{comment}
\usepackage{authblk}
\usepackage{xcolor}   
\usepackage{colortbl}
\usepackage{booktabs} 
\usepackage{makecell}
\usepackage{hyperref}
\hypersetup{
  breaklinks,
  colorlinks,
  citecolor={green!50!black},
  urlcolor={red!50!black},
  linkcolor={green!50!black}
}
\usepackage[capitalize,nameinlink]{cleveref} 
\usepackage{amsthm}
\usepackage{threeparttable}

\crefname{assumption}{Assumption}{Assumptions}
\Crefname{assumption}{Assumption}{Assumptions}
\newtheorem{assumption}{Assumption}
\newtheorem{remark}{Remark}
\newtheorem{theorem}{Theorem}
\newtheorem{lemma}{Lemma}
\newtheorem{proposition}{Proposition}
\newtheorem{definition}{Definition}

\title{\textbf{Adaptive Barzilai-Borwein Proximal Gradient Method for Nonconvex Optimization }}

\author[1]{Yinuo Li}
\author[1]{Na Huang\thanks{Research partially supported by National Natural Science Foundation of China (No.\,12001531).}}
\author[1]{Ruizhi Zhou}

\affil[1]{Department of Applied Mathematics, College of Science, China Agricultural University, Beijing, China.}

\affil{E-mail: \texttt{lnuo@cau.edu.cn, hna@cau.edu.cn., rzzhou@cau.edu.cn
}}

\begin{document}
\maketitle
\begin{abstract}

 The Barzilai–Borwein (BB) method is an efficient gradient-based approach for unconstrained optimization that approximates spectral information of the Hessian matrix to capture curvature at low computational cost. In this paper, we extend the BB stepsize strategy to composite nonconvex optimization problems consisting of a smooth nonconvex term and a proper closed convex term, and propose an adaptive Barzilai–Borwein proximal gradient method for nonconvex optimization (AdaBBNC). The proposed method incorporates a flexible BB-based curvature estimate into the proximal gradient framework to enhance adaptability in nonconvex settings. Under mild assumptions, we establish that AdaBBNC achieves the optimal iteration complexity of $\mathcal{O}(\epsilon^{-2})$ for finding an $\epsilon$-stationary point, without requiring any prior knowledge of the global Lipschitz constant. Numerical experiments demonstrate the effectiveness and robustness of the proposed method. Compared with recent parameter-free and linesearch-free adaptive proximal gradient methods, AdaBBNC exhibits more aggressive yet stable behavior in ill-conditioned optimization problems.


\end{abstract}

\vspace{1em}
\noindent\textbf{Keywords:} Nonconvex optimization, Proximal gradient method, Linesearch-free method, Barzilai-Borwein method, Iteration complexity.

\vspace{1em}
\noindent\textbf{Mathematical Subject Classifications: 90C26} 

\section{Introduction}

Composite optimization problems have attracted considerable attention due to their broad applications in machine learning and high-dimensional statistics \cite{candesExactMatrixCompletion2012,candes2008introduction,hastieStatisticalLearningSparsity2015,tibshiraniRegressionShrinkageSelection1996,wrightSparseReconstructionSeparable2009}. In this work, we consider the following class of composite optimization problems:
\begin{equation}\label{eq:composite_problem}
    \min_{x \in \mathbb{R}^n} F(x) := f(x) + g(x),
\end{equation}
where $f: \mathbb{R}^n \rightarrow \mathbb{R}$ is a continuously differentiable, but possibly nonconvex function, and $g: \mathbb{R}^n \rightarrow \mathbb{R} \cup \{+\infty\}$ is a proper, lower semi-continuous, and convex function. 

Over the past decades, a broad range of methods has been developed for solving problem \eqref{eq:composite_problem}. Among them, second-order methods, such as proximal Newton and quasi-Newton methods \cite{leeProximalNewtontypeMethods2014,stellaForwardBackwardQuasiNewton2017}, often exhibit rapid local convergence by exploiting curvature information through Hessian matrices or their approximations. To effectively decouple the smooth and nonsmooth components, operator splitting techniques have also been extensively investigated. Representative examples include the alternating direction method of multipliers and various proximal splitting schemes \cite{combettesProximalSplittingMethods2011,lionsSplittingAlgorithmsSum1979,neal2011distributed}, which provide effective frameworks for handling complex coupled nonsmooth regularizers. Additionally, smoothing techniques \cite{beckSmoothingFirstOrder2012,nesterovSmoothMinimizationNonsmooth2005} offer an alternative strategy by constructing differentiable approximations of the nonsmooth terms. However, as the dimensionality of modern datasets and the scale of optimization variables continue to grow, second-order methods often become computationally prohibitive due to the high cost associated with forming, storing, and inverting Hessian matrices or their approximations.

Consequently, the focus in large-scale optimization has increasingly shifted toward first-order methods, which rely solely on gradient evaluations and scale favorably with the problem dimension \cite{parikhProximalAlgorithms2014}. The proximal gradient method (PGM) \cite{beckFirstOrderMethodsOptimization2017,lionsSplittingAlgorithmsSum1979,passtyErgodicConvergenceZero1979} is one of the most classical and widely studied approaches. By fully leveraging the composite structure of $F(x)$, PGM generates iterates according to
\begin{equation}\label{eq:pgm_update}
    x_{k+1} = \text{prox}_{\alpha_k g}(x_k - \alpha_k \nabla f(x_k)),
\end{equation}
where $\alpha_k > 0$ denotes the stepsize at the $k$-th iteration, and ${\rm prox}_{\alpha_k g}(\cdot)$ is the proximal operator associated with $g$, defined by
\begin{equation*}
    \text{prox}_{\alpha_k g}(x) := \arg\min_{y \in \mathbb{R}^n} \left\{ g(y) + \frac{1}{2\alpha_k} \|y - x\|^2 \right\}.
\end{equation*}

Although the classical PGM admits a complete convergence analysis, its practical performance is often highly sensitive to stepsize selection. The standard PGM typically relies on the global Lipschitz constant of the gradient, usually denoted by $L$, to determine the stepsize \cite{beckFirstOrderMethodsOptimization2017,nesterovIntroductoryLecturesConvex2004}. However, obtaining a tight and reliable estimate of $L$ is generally difficult. Even when $L$ is available, the resulting constant stepsize $1/L$ is dictated by the worst-case curvature of the objective landscape, which can be overly conservative and lead to slow convergence in relatively flat regions. To address this issue, linesearch (or backtracking) strategies have been widely adopted as the \textit{de facto} standard \cite{armijoMinimizationFunctionsHaving1966,bellocruzConvergenceForwardBackward2016,bertsekas1997nonlinear,goldsteinCauchysMethodMinimization1962}. But linesearch procedures typically require multiple evaluations of the objective function at each iteration. For large-scale problems or models with expensive function evaluations, this additional computational cost can become a significant bottleneck.



Recent research has increasingly focused on adaptive stepsize strategies to mitigate the drawbacks of conservative constant stepsizes. The class of ``linesearch-free'' or ``adaptive'' techniques was first developed for variational inequalities \cite{malitskyProjectedReflectedGradient2015, malitskyGoldenRatioAlgorithms2020} and later extended to smooth convex optimization \cite{malitskyAdaptiveGradientDescent2019a,zhouAdaBBAdaptiveBarzilaiBorwein2024}. Along these lines, adaptive variants of PGM that dynamically exploit local curvature information without the computational burden of linesearch procedures, have been extensively investigated \cite{latafatAdaptiveProximalAlgorithms2023,malitskyAdaptiveProximalGradient2024a}. These methods typically estimate the local smoothness modulus by
\begin{equation}\label{L_k}
    L_k = \frac{\left\| \nabla f(x^k) - \nabla f(x^{k-1}) \right\|}{\left\| x^k - x^{k-1} \right\|},
\end{equation}
which has further inspired the development of more advanced methods. Nevertheless, their theoretical guarantees and initial applications were mainly tailored for convex or well-structured settings. For general nonconvex optimization problems, a critical ongoing challenge is the development of adaptive stepsize strategies capable of effectively and robustly navigating highly irregular and unpredictable local geometries \cite{lanOptimalParameterfreeGradient2026, yagishitaSimpleLinesearchfreeFirstorder2025a}. \citet{lanProjectedGradientMethods2024a} proposed the auto-conditioned projected gradient (AC-PG) method for nonconvex constrained optimization, where the stepsize is chosen as the reciprocal of $\gamma_t$, with
\begin{equation*}
    \gamma_t = \max\{\mathcal{L}_0, \dots, \mathcal{L}_{t-1}\} \quad \text{and} \quad \mathcal{L}_t = \frac{2(f(x_t)-f(x_{t-1})-\langle \nabla f(x_{t-1}), x_t-x_{t-1}\rangle)}{\|x_t-x_{t-1}\|^2} \,.
\end{equation*}
The problem in \cite{lanProjectedGradientMethods2024a} can be viewed as a special case of \eqref{eq:composite_problem} in which the nonsmooth term $g(x)$ is the indicator function of a closed convex set $\mathcal{X}$. Recently, \citet{yagishitaSimpleLinesearchfreeFirstorder2025a} extended this auto-conditioned stepsize to the general composite setting \eqref{eq:composite_problem} and introduced the auto-conditioned proximal gradient method (AC-PGM), thereby substantially broadening the applicability of linesearch-free adaptive stepsizes. Note that $\gamma_t$ is monotonically nondecreasing, which implies that the the corresponding stepsize is monotonically nonincreasing. As a result, once the stepsize is reduced due to a locally high-curvature region, it cannot increase again even when the iterates subsequently enter flatter regions of the objective landscape. This may lead to unnecessarily small stepsizes, thereby slowing practical convergence.



Nonmonotone adaptive stepsizes have also been studied for nonconvex optimization \cite{liuNonmonotoneAcceleratedProximal2024,yeSimpleAdaptiveProximal2025a}. Their convergence analysis often relies on Lyapunov functions to move beyond the traditional framework requiring restrictive monotonic descent of the objective function. A notable example is the adaptive proximal gradient method for nonconvex (AdaPGNC)  proposed in \cite{yeSimpleAdaptiveProximal2025a}, which distinguishes itself from traditional approaches by exploiting both local upper and lower curvature information. Under the weak convexity assumption, the method dynamically estimates the local lower curvature as follows:
\begin{equation}\label{l_k}
    \ell_k = \frac{2\left(f(x_k) - f(x_{k-1}) - \langle \nabla f(x_k), x_k - x_{k-1} \rangle\right)}{\|x_k - x_{k-1}\|^2} \text{}.
\end{equation}

This parameter provides an effective characterization of the local geometry: $\ell_k \le 0$ indicates that $f(x)$ exhibits locally convex behavior, whereas $\ell_k > 0$ signals the presence of local nonconvexity. By combining this estimate with the upper curvature approximation $L_k$ in \eqref{L_k}, AdaPGNC updates its stepsize $\lambda_k$ according to
\begin{equation}\label{AdaPGNC}
    \lambda_k =
    \begin{cases}
        \min\left\{\sqrt{1+\rho_{k-1}}\lambda_{k-1}, \frac{1}{L_k}\right\}, & \text{if } \ell_k \le 0;\\[4pt]
        \min\left\{\sqrt{1+\rho_{k-1}}\lambda_{k-1}, \frac{1}{\sqrt{2}L_k}, \sqrt{\frac{\lambda_{k-1}}{2\ell_k}}\right\}, & \text{otherwise} \text{}.
    \end{cases}
\end{equation}
The update rule \eqref{AdaPGNC} remains relatively conservative in nonconvex regions (i.e., when $\ell_k > 0$) as the strict scaling factors and multiple minimum operators impose a severe truncation on the stepsize, restricting the mehtod's ability to take larger and potentially more effective exploratory steps.

Therefore, it is natural to seek richer curvature approximations that permit more aggressive adaptive stepsize updates. This motivates us to consider the Barzilai-Borwein (BB) method \cite{barzilaiTwopointStepSize1988a}, whose stepsizes have strong nonmonotone behavior but significantly improve the performance of gradient methods \cite{raydanBarzilaiBorweinGradient1997,burdakovStabilizedBarzilaiborweinMethod2019}. Based on an approximate secant condition, the two classical BB stepsizes are given by
\begin{align}
    \text{(Long BB)} \quad \alpha_k^{\mathrm{BBL}} &= \dfrac{\|x_k - x_{k-1}\|^2}{\langle \nabla f(x_k) - \nabla f(x_{k-1}), x_k - x_{k-1} \rangle}, \label{eq:long_bb} \\[4pt]
    \text{(Short BB)} \quad \alpha_k^{\mathrm{BBS}} &= \dfrac{\langle \nabla f(x_k) - \nabla f(x_{k-1}), x_k - x_{k-1} \rangle}{\|\nabla f(x_k) - \nabla f(x_{k-1})\|^2}. \label{eq:short_bb}
\end{align}
Both stepsizes effectively extract pseudo-second-order curvature information from first-order gradient differences, providing a powerful tool to exploit the local geometry structure. 
Convergence analysis of the BB method was initially developed for strongly convex quadratic problems, where $R$-superlinear convergence was established in the two dimensional case \cite{barzilaiTwopointStepSize1988a}, and subsequently extended to $n$ dimensions with a global $R$-linear convergence rate \cite{daiRlinearConvergenceBarzilai2002,raydanBarzilaiBorweinChoice1993}.
However, for general convex functions, the BB method may fail to converge and can even diverge, including in certain strongly convex cases \cite{burdakovStabilizedBarzilaiborweinMethod2019}. 
To address this limitation, early works introduced nonmonotone linesearch techniques \cite{grippoNonmonotoneLineSearch1986} to ensure global convergence \cite{raydanBarzilaiBorweinGradient1997}. Subsequently, considerable effort has been devoted to preserving the efficiency of BB-type stepsizes while avoiding the computational overhead of linesearches. 
This has led to a series of stabilized and theoretically justified adaptive variants, including the stabilized BB method \cite{burdakovStabilizedBarzilaiborweinMethod2019}, the dynamically bounded BB method \cite{wangAdaptiveLearningRate2023}, and the adaptive BB method \cite{zhouAdaBBAdaptiveBarzilaiBorwein2024}, all of which have shown strong practical performance and robustness.

However, the application of these stabilized BB methods remains largely confined to convex settings. In highly nonconvex and unpredictable landscapes, existing bounding strategies and analytical frameworks are often no longer adequate. As a result, the safe incorporation of the aggressive BB stepsize into nonconvex composite optimization, while still preserving rigorous convergence guarantees, remains a challenging and open problem.

\subsection{Our Contributions}

To bridge the gap between the BB stepsizes and nonconvex optimization, we propose a novel adaptive Barzilai-Borwein proximal gradient method for nonconvex optimization (AdaBBNC). Our main contribution is the introduction of a novel adaptive BB-based stepsize that updates using curvature estimates and operates without linesearch. In addition to the original estimate in \eqref{l_k}, we introduce a new curvature estimator tailored to locally nonconvex settings, which combines historical stepsize information with the long BB stepsize \eqref{eq:long_bb}. This design enables AdaBBNC to relax the conservative stepsize truncation commonly imposed by existing methods, thereby allowing larger and more effective steps in ill-conditioned landscapes.

Theoretically, we establish global convergence guarantees for AdaBBNC. By designing a customized parameter-dependent Lyapunov function to accommodate the proposed stepsize, we prove that AdaBBNC achieves an $\mathcal{O}(\epsilon^{-2})$ iteration complexity for finding an $\epsilon$-stationary point. We further evaluate AdaBBNC on several test problems, including the  Rosenbrock function, ill-conditioned matrix factorization, box-constrained quadratic programming, and low-rank matrix completion. The numerical results demonstrate that AdaBBNC is efficient and competitive in comparison with some existing methods, such as the linesearch-free approaches AdaPGNC~\cite{yeSimpleAdaptiveProximal2025a}, AC-PG~\cite{lanProjectedGradientMethods2024a} and AC-PGM~\cite{yagishitaSimpleLinesearchfreeFirstorder2025a}, as well as traditional linesearch-based methods like GD-LS~\cite{armijoMinimizationFunctionsHaving1966}.


To position our work within the recent literature, \Cref{tab:contributions} summarizes the features of AdaBBNC alongside several related gradient-based methods. As shown therein, AdaBBNC is, to the best of our knowledge, the first linesearch-free nonmonotone adaptive framework that successfully incorporates the Barzilai-Borwein stepsize for general nonconvex composite optimization.

\begin{table}[htbp]
    \centering
    \caption{Comparison of the features among related gradient-based methods.}
    \label{tab:contributions}
    \renewcommand{\arraystretch}{1.3}
    \resizebox{\linewidth}{!}{
    \begin{tabular}{lccccc} 
        \toprule
        \textbf{Method} & \textbf{Target Problem} & \textbf{\makecell{Unbounded\\ Domain}} & \textbf{\makecell{Linesearch\\ -Free}} & \textbf{BB stepsize} & \textbf{\makecell{Allows stepsize\\ Growth}} \\ 
        \midrule
        GD-LS \cite{armijoMinimizationFunctionsHaving1966}               &General    & \checkmark & $\times$   & $\times$   & \checkmark \\
        AdGD \cite{malitskyAdaptiveGradientDescent2019a} & Convex & \checkmark & \checkmark & $\times$   & \checkmark \\
        AdaBB \cite{zhouAdaBBAdaptiveBarzilaiBorwein2024}    & Convex & \checkmark & \checkmark & \checkmark & \checkmark \\
        AC-PG \cite{lanProjectedGradientMethods2024a}     & Nonconvex & $\times$ \textsuperscript{a} & \checkmark & $\times$   & $\times$ \textsuperscript{b} \\
        AdaPGNC \cite{yagishitaSimpleLinesearchfreeFirstorder2025a}    & Nonconvex & \checkmark & \checkmark & $\times$   & \checkmark \\
        \rowcolor{gray!15} 
        \textbf{AdaBBNC (Ours)} & \textbf{Nonconvex} & \textbf{\checkmark} & \textbf{\checkmark} & \textbf{\checkmark} & \textbf{\checkmark} \\
        \bottomrule
        \multicolumn{6}{p{\linewidth}}{\footnotesize \textsuperscript{a} AC-PG requires the problem domain to be a bounded compact set.} \\
        \multicolumn{6}{p{\linewidth}}{\footnotesize \textsuperscript{b} AC-PG uses a monotonically nonincreasing stepsize rule, forcing strict shrinkage.} \\
    \end{tabular}
    }
\end{table}

\subsection{Notation and Organization}

\paragraph*{Notation.} Let $\mathbb{R}^n$ be the $n$-dimensional Euclidean space. For any vectors $x, y \in \mathbb{R}^n$, $\langle x, y \rangle$ denotes their standard inner product, $\|x\|$ denotes the Euclidean norm, and $\lceil \cdot \rceil$ denotes the ceiling function. For a proper, closed, and convex function $g: \mathbb{R}^n \rightarrow \mathbb{R} \cup \{+\infty\}$, its subdifferential at a point $x \in \mathbb{R}^n$ is defined as $\partial g(x) = \{v \in \mathbb{R}^n \mid g(y) \ge g(x) + \langle v, y - x \rangle, \; \forall y \in \mathbb{R}^n\}$, and we adopt the convention that $c/0 = +\infty$ for any $c > 0$.

\paragraph*{Organization.}
\Cref{sec:ABBPG} presents the AdaBBNC method and its stepsize update strategy. \Cref{sec:converge} establishes its global convergence. Numerical experiments are reported in \Cref{sec:experiments}. Conclusions are summarized in \Cref{sec:con}.

\section{Adaptive Barzilai-Borwein Proximal Gradient Method}\label{sec:ABBPG}

In this section, we present the adaptive Barzilai–Borwein proximal gradient method for solving \eqref{eq:composite_problem} and begin by introducing several assumptions and definitions.

\begin{assumption}\label{assum:blanket}
Throughout this paper, we make the following standard assumptions regarding the objective function $F(x) = f(x) + g(x)$:
\begin{enumerate}
    \item[{\rm (i)}] The nonsmooth term $g: \mathbb{R}^n \rightarrow \mathbb{R} \cup \{+\infty\}$ is a proper, lower semi-continuous, and convex function.
    \item[{\rm (ii)}] The smooth term $f: \mathbb{R}^n \rightarrow \mathbb{R}$ is continuously differentiable and globally $L$-smooth, i.e., there exists a constant $L > 0$ such that $\|\nabla f(x) - \nabla f(y)\| \le L \|x - y\|$ for all $x, y \in \mathbb{R}^n$.
    \item[{\rm (iii)}] The objective function $F$ is bounded from below, i.e., $F_* := \inf_{x \in \mathbb{R}^n} F(x) > -\infty$.
\end{enumerate}
\end{assumption}


\begin{definition}[Gradient Mapping \cite{nesterovGradientMethodsMinimizing2013}]\label{def:gradient_mapping}
For problem \eqref{eq:composite_problem}, the gradient mapping at $x \in \mathbb{R}^n$ with stepsize $\alpha > 0$ is defined by
    \begin{equation*}
        \mathcal{G}_\alpha(x) := \frac{1}{\alpha} \left( x - \mathrm{prox}_{\alpha g}(x - \alpha \nabla f(x)) \right).
    \end{equation*}
\end{definition}

\Cref{def:gradient_mapping} provides a natural generalization of the gradient to composite objective functions. In particular, when the nonsmooth term vanishes ($g \equiv 0$), it reduces to the standard gradient, i.e., $\mathcal{G}\alpha(x) = \nabla f(x)$. Moreover, for the PGM iterative scheme \eqref{eq:pgm_update}, we have
\begin{equation}\label{eq:gradient_mapping}
        \mathcal{G}_{k} := \mathcal{G}_{\alpha_{k}}(x_{k}) = \frac{x_{k} - x_{k+1}}{\alpha_{k}}.
\end{equation}
    

\begin{definition}[Weak Convexity]\label{def:weak_convexity}
    A differentiable function $f: \mathbb{R}^n \to \mathbb{R}$ is said to be $l$-weakly convex for some constant $l \ge 0$, if the perturbed function $x \mapsto f(x) + \frac{\ell}{2}\|x\|^2$ is convex.
\end{definition}

This geometric property ensures that $f(x)$ can be globally lower-bounded by a quadratic model. Indeed, under \cref{assum:blanket}(ii), there exists a constant $\ell \in [0, L]$ such that $f(x)$ is $l$-weakly convex \cite{davis2019stochastic}, and consequently, the following quadratic bounds hold for all $x, y \in \mathbb{R}^n$ \cite{nesterovIntroductoryLecturesConvex2004}:
\begin{equation}\label{eq:weak_convexity}
    -\frac{\ell}{2} \|x - y\|^2 \le f(x) - f(y) - \langle \nabla f(y), x - y \rangle \le \frac{L}{2} \|x - y\|^2.
\end{equation}

To explore more aggressive stepsizes, we incorporate the BB stepsize into the PGM update \eqref{eq:pgm_update}. A related idea was also presented in \cite[Remark 4]{yeSimpleAdaptiveProximal2025a}, where the authors suggested updating 
$$\alpha_{k}=\min\left\{\sqrt{1+\rho_{k-1}}\alpha_{k-1}, \alpha_k^{\mathrm{BBS}}\right\}$$ 
with $\alpha_k^{\mathrm{BBS}}$ the short BB stepsize in \eqref{eq:short_bb} and ${\rho_k}$ a summable nonnegative sequence. Certainly, we prefer the long BB stepsize $\alpha_k^{\mathrm{BBL}}$ here. In nonconvex settings, $\alpha_k^{\mathrm{BBL}}$ may be zero or negative. Hence, if $\alpha_k^{\mathrm{BBL}}\le0$ or $\ell_k\le0$ (i.e., locally convex case), we keep $\alpha_k = \min\{\sqrt{1+\rho_{k-1}}\alpha_{k-1}, 1/L_k\}$, which has been shown to be effective in convex problems \cite{yeSimpleAdaptiveProximal2025a}. Otherwise, we adopt $\alpha_{k}=\min\left\{\sqrt{1+\rho_{k-1}}\alpha_{k-1}, \alpha_k^{\mathrm{BBL}}\right\}$. Then it follows an adaptive stepsize
\begin{equation}\label{eq:heuristic}
    \alpha_k = 
    \begin{cases}
        \min\left\{\sqrt{1+\rho_{k-1}}\alpha_{k-1}, \frac{1}{L_k}\right\}, & \text{if } \ell_k \le 0 \text{ or } (\alpha_k^{\mathrm{BBL}}) \le 0; \\[4pt]
        \min\left\{\sqrt{1+\rho_{k-1}}\alpha_{k-1}, \alpha_k^{\mathrm{BBL}}\right\}, & \text{otherwise}.
    \end{cases}
\end{equation}
However, the following example indicates that such a direct extension is not feasible.




Consider the problem \eqref{eq:composite_problem} with
\begin{equation}\label{example-adabb}
       f(x) = \begin{cases}
    \frac{1}{4}x^4 - \frac{1}{2}x^2, & \text{if } |x| \le 2, \\[4pt]
    \frac{11}{2}x^2-16|x|+12, & \text{if } |x| > 2
    \end{cases}
    \quad {\rm and} \quad g(x)=0. 
\end{equation}
Clearly, $g(x)$ is convex and continuously differentiable, which is stronger than the requirement in \Cref{assum:blanket}(i). Since $|f''(x)|\le11$, it follows that $f(x)$ is globally $L$-smooth with $L=11$. This, along with $F(x) = f(x) \ge -1/4$, implies that \Cref{assum:blanket} holds. For convenience, we refer to the PGM \eqref{eq:pgm_update} equipped with the adaptive stepsize \eqref{eq:heuristic} as \textbf{Naive-AdaBB}. In this example, we compare Naive-AdaBB with AdaPGNC, setting $x_0 = 0.1$ and $\rho_k = \frac{100(\ln(k+1))^4}{(k+1)^{1.1}},~\forall~k \ge 1$. As shown in \Cref{fig:toy_experiment}, while AdaPGNC takes conservatively small steps to safely traverse the concave region, Naive-AdaBB substantially overestimates the optimal stepsize, resulting in severe numerical instability.



\begin{figure}[htbp]
    \centering
    \includegraphics[width=1.0\linewidth]{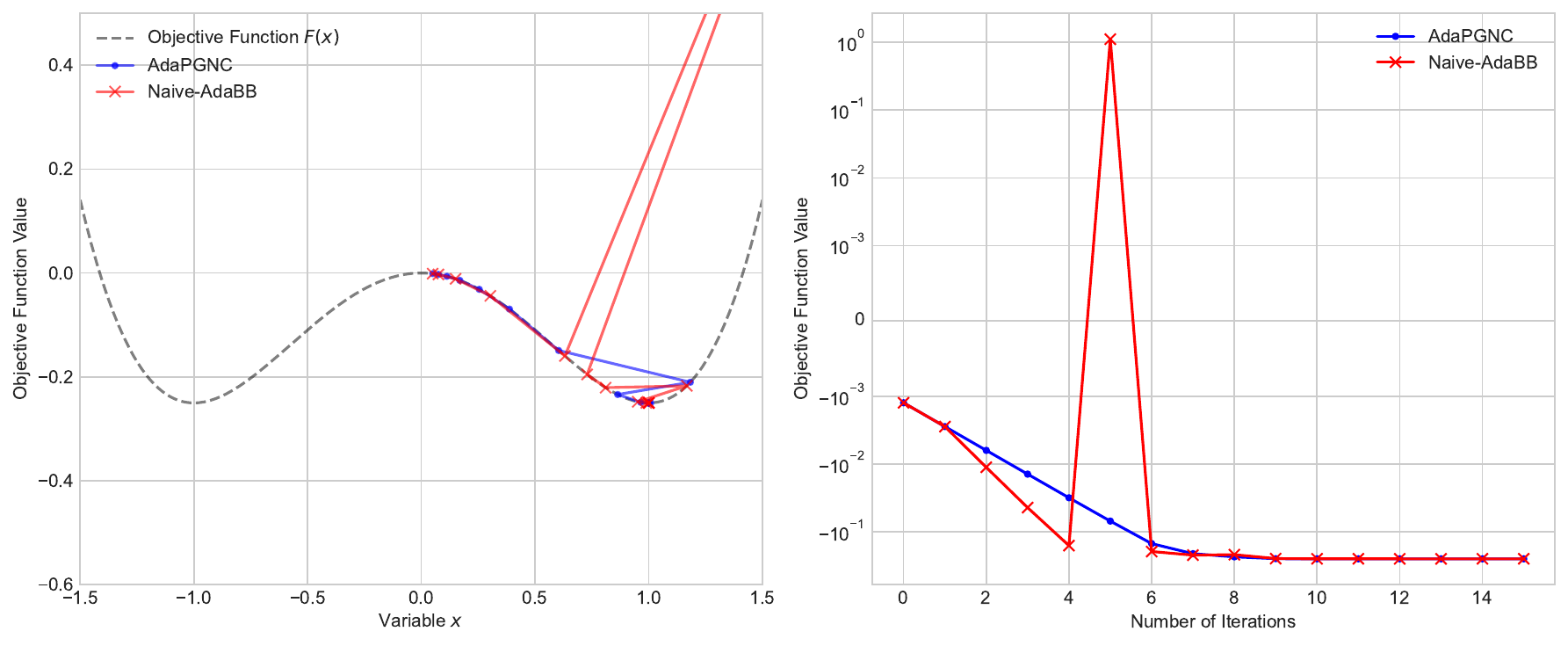}
    \caption{Numerical behavior of AdaPGNC and Naive-AdaBB on Example \eqref{example-adabb}.}
    \label{fig:toy_experiment}
\end{figure}

Therefore, we combine $\alpha_k^{\mathrm{BBL}}$ with $L_k$, $\ell_k$, and $\alpha_{k-1}$, and construct a new curvature approximation:
\begin{equation}\label{eq:generalized_bb_curvature}
    H_k(\tau, \theta) = L_k^2 + \frac{2+\tau}{2\alpha_{k-1}}\ell_k - \frac{\tau+\theta}{\alpha_{k-1}^2} + \frac{\tau}{\alpha_{k-1}} (\alpha_k^{\mathrm{BBL}})^{-1},
\end{equation}
where $\tau \ge 0$ and $\theta \in [0,1)$. Based on this, we propose the following adaptive stepsize:
\begin{equation}\label{eq:stepsize-adbb}
    \alpha_k = 
    \begin{cases}
        \min\left\{\sqrt{1+\rho_{k-1}}\alpha_{k-1}, \frac{1}{L_k}\right\}, & \text{if } \ell_k \le 0 \text{ or } H_k(\tau, \theta) \le 0; \\[4pt]
        \min\left\{\sqrt{1+\rho_{k-1}}\alpha_{k-1}, \frac{1}{\sqrt{H_k(\tau, \theta)}}\right\}, & \text{otherwise}.
    \end{cases}
\end{equation}
Substituting \eqref{eq:stepsize-adbb} into the PGM iterative framework \eqref{eq:pgm_update} leads to an adaptive Barzilai-Borwein proximal gradient method for solving \eqref{eq:composite_problem}, as summarized in \Cref{alg:adabbnc}.


\begin{algorithm}
\caption{Adaptive Barzilai-Borwein Proximal Gradient Method for NonConvex Optimization (AdaBBNC)}
\label{alg:adabbnc}
\begin{algorithmic}[1]
\REQUIRE $x_0 \in \mathbb{R}^n$, initial stepsize $\alpha_0 > 0$, nonnegative sequence $\{\rho_k\}_{k=0}^\infty$ such that $\sum_{k=0}^\infty \rho_k < +\infty$, parameters $\tau \ge 0$ and $\theta \in [0, 1)$.
\STATE Update $x_1 = \mathrm{prox}_{\alpha_0 g}(x_0 - \alpha_0 \nabla f(x_0))$.
\FOR{$k = 1, 2, \dots$}
    \STATE Compute the upper curvature $L_k$ and the lower curvature $\ell_k$ according to \eqref{L_k} and \eqref{l_k}, respectively.
    \IF{$\ell_k \le 0$}
        \STATE $\alpha_k = \min\left\{\sqrt{1+\rho_{k-1}}\alpha_{k-1}, \frac{1}{L_k}\right\}$.
    \ELSE
        \STATE Compute the long BB stepsize $\alpha_k^{\mathrm{BBL}}$ and the generalized BB-curvature bound $H_k(\tau, \theta)$ according to \eqref{eq:long_bb} and \eqref{eq:generalized_bb_curvature}, respectively.
        \IF{$H_k(\tau, \theta) \le 0$}
            \STATE $\alpha_k = \min\left\{\sqrt{1+\rho_{k-1}}\alpha_{k-1}, \frac{1}{L_k}\right\}$.
        \ELSE
            \STATE $\alpha_k = \min\left\{\sqrt{1+\rho_{k-1}}\alpha_{k-1}, \frac{1}{\sqrt{H_k(\tau, \theta)}}\right\}$.
        \ENDIF
    \ENDIF
    \STATE Update $x_{k+1} = \mathrm{prox}_{\alpha_k g}(x_k - \alpha_k \nabla f(x_k))$.
\ENDFOR
\end{algorithmic}
\end{algorithm}

\begin{remark}
    If $\tau=\theta=0$, we have $H_k(0, 0)= L_k^2 + \frac{\ell_k}{\alpha_{k-1}}$. Then AdaBBNC reduces to the method studied in \cite{yeSimpleAdaptiveProximal2025a}. When $\tau+\theta>0$, the negative term $-\frac{\tau+\theta}{\alpha_{k-1}^2}$ becomes active, allowing the stepsize to adapt more aggressively in nonconvex landscapes and potentially leading to larger stepsizes. If extreme curvature causes $H_k(\tau,\theta)\le 0$, AdaBBNC automatically reverts to the conservative selection. Therefore, the proposed AdaBBNC is well-defined.
\end{remark}

\section{Convergence analysis}\label{sec:converge}






In this section, we establish the global convergence of AdaBBNC and first show that the stepsize sequence admits uniform upper and positive lower bounds.

\begin{proposition}[Boundedness of $\{\alpha_k\}$]\label{prop:stepsize_bound}
Let $\{\alpha_k\}$ be the stepsize sequence generated by \cref{alg:adabbnc}, and let $P = \sum_{k=0}^\infty \rho_k < \infty$. Under \cref{assum:blanket}, for any $\tau \ge 0$ and  $\theta \in [0, 1)$, the stepsizes satisfy
\begin{equation*}\label{eq:alpha_bounds}
    0 < \underline{\alpha} \le \alpha_k \le \overline{\alpha}, \quad \forall~k \ge 0,
\end{equation*}
where $\overline{\alpha} = \alpha_0 \exp(\frac{P}{2})$ and
\begin{equation}\label{eq:alpha_lower_bound}
    \underline{\alpha} = 
    \begin{cases}
        \min \left\{ \alpha_0, \frac{1}{L}, \frac{1}{\sqrt{L^2 + \frac{\nu_L^2}{4(\tau+\theta)}}} \right\}, & \text{if } \tau+\theta > 0; \\[10pt]
        \min \left\{ \alpha_0, \frac{\sqrt{5}-1}{2L} \right\}, & \text{if } \tau=\theta=0,
    \end{cases}
\end{equation}
with $\nu_L = \frac{2+\tau}{2}L$.
\end{proposition}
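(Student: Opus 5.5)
Both bounds will be proved by induction on $k$, using only the update rule \eqref{eq:stepsize-adbb} and the curvature bounds implied by \cref{assum:blanket}(ii). From the Lipschitz condition we have $L_k \le L$. From the two-sided bounds \eqref{eq:weak_convexity} applied with $y=x_k$, $x=x_{k-1}$, we get $\ell_k \le \ell \le L$. Whenever $\alpha_k^{\mathrm{BBL}}>0$, Cauchy--Schwarz gives $(\alpha_k^{\mathrm{BBL}})^{-1} \le L_k \le L$, and if $\alpha_k^{\mathrm{BBL}}\le 0$ the term $\frac{\tau}{\alpha_{k-1}}(\alpha_k^{\mathrm{BBL}})^{-1}$ is nonpositive. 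Hence in every case
\begin{equation*}
H_k(\tau,\theta) \le L^2 + \frac{2+\tau}{2\alpha_{k-1}}L - \frac{\tau+\theta}{\alpha_{k-1}^2} + \frac{\tau L}{\alpha_{k-1}}.
\end{equation*}
The term $\tau L/\alpha_{k-1}$ would spoil the constant $\nu_L=\frac{2+\tau}{2}L$ claimed in the statement. I will therefore absorb it differently: the quantity $(\alpha_k^{\mathrm{BBL}})^{-1}=\langle \nabla f(x_k)-\nabla f(x_{k-1}),s\rangle/\|s\|^2$ is, by \eqref{eq:weak_convexity}, at most $L$. The precise bookkeeping of the linear-in-$1/\alpha_{k-1}$ coefficient, and whether it matches $\nu_L$ exactly, is what I expect to be the main obstacle. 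My first attempt will be to bound everything linear in $1/\alpha_{k-1}$ by $\nu_L/\alpha_{k-1}$ via $\ell_k\le L$. If the BB term needs separate treatment, I will use that it is paired with $-\tau/\alpha_{k-1}^2$.

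\emph{Upper bound.} In every branch $\alpha_k \le \sqrt{1+\rho_{k-1}}\,\alpha_{k-1}$, so
\begin{equation*}
\alpha_k \le \alpha_0 \prod_{j<k}\sqrt{1+\rho_j} \le \alpha_0 \exp\Big(\tfrac12\sum_j \rho_j\Big) = \overline{\alpha},
\end{equation*}
using $1+t\le e^t$.

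\emph{Lower bound, case $\tau+\theta>0$.} Set $u=1/\alpha_{k-1}>0$. The bound above reads $H_k \le L^2 + \nu_L u - (\tau+\theta)u^2$. Maximizing this concave quadratic in $u$ gives $H_k \le L^2 + \nu_L^2/(4(\tau+\theta))$, uniformly in $k$. Now induct, assuming $\alpha_{k-1}\ge\underline{\alpha}$:
\begin{itemize}
\item The first term of each minimum satisfies $\sqrt{1+\rho_{k-1}}\,\alpha_{k-1} \ge \alpha_{k-1} \ge \underline{\alpha}$.
\item In the conservative branch, $1/L_k \ge 1/L \ge \underline{\alpha}$.
\item In the $H_k>0$ branch, $1/\sqrt{H_k} \ge \big(L^2+\nu_L^2/(4(\tau+\theta))\big)^{-1/2} \ge \underline{\alpha}$.
\end{itemize}
The base case holds because $\underline{\alpha}\le\alpha_0$.

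\emph{Lower bound, case $\tau=\theta=0$.} Here $H_k=L_k^2+\ell_k/\alpha_{k-1}$ and no quadratic penalty is available, so the inductive hypothesis must be used instead. Assume $\alpha_{k-1}\ge a:=\underline{\alpha}$. Then $H_k \le L^2 + L/a$. It suffices to check $1/\sqrt{L^2+L/a}\ge a$, that is, $a^2L^2 + aL \le 1$. With $t=aL$ this is $t^2+t-1\le 0$, which holds for $t\le(\sqrt5-1)/2$. This is exactly what the choice $a\le\frac{\sqrt5-1}{2L}$ guarantees. The other branches are handled as before, since $1/L\ge a$.
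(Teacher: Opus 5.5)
Your upper bound, your handling of the conservative branch, and your case $\tau=\theta=0$ are correct and match the paper. The gap is in the case $\tau+\theta>0$. The sentence ``the bound above reads $H_k \le L^2+\nu_L u-(\tau+\theta)u^2$'' is not what you derived. Bounding $\ell_k\le L$ and $(\alpha_k^{\mathrm{BBL}})^{-1}\le L$ separately gives the linear coefficient $\frac{2+\tau}{2}L+\tau L=\frac{2+3\tau}{2}L$. That yields only $H_k\le L^2+\frac{(2+3\tau)^2L^2}{16(\tau+\theta)}$, which for $\tau>0$ is strictly larger than $L^2+\frac{\nu_L^2}{4(\tau+\theta)}$.

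Since $\bigl(L^2+\frac{\nu_L^2}{4(\tau+\theta)}\bigr)^{-1/2}<\frac1L$, that third term is the binding one in $\underline{\alpha}$ whenever $\alpha_0$ is not small, so the stated constant is not reached. The inductive hypothesis $u\le 1/\underline{\alpha}$ does not rescue this bound. Take $L=\tau=1$, $\theta=0$ and $\alpha_0$ large: at $u=1/\underline{\alpha}=5/4$ your bound gives $41/16>25/16=\underline{\alpha}^{-2}$. Your fallback also fails. Pairing the BB term with $-\tau u^2$ gives $\tau(Lu-u^2)\le\tau L^2/4$, but it leaves $L^2+\nu_L u-\theta u^2$. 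That remainder is unbounded in $u$ when $\theta=0$, and otherwise gives the worse constant $L^2+\frac{\tau L^2}{4}+\frac{\nu_L^2}{4\theta}$.

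The missing idea is that $\ell_k$ and $(\alpha_k^{\mathrm{BBL}})^{-1}$ cannot both be close to $L$, because their combination is controlled by a single Taylor remainder:
\[
\ell_k+2(\alpha_k^{\mathrm{BBL}})^{-1}=\frac{2\bigl[f(x_k)-f(x_{k-1})-\langle\nabla f(x_{k-1}),x_k-x_{k-1}\rangle\bigr]}{\|x_k-x_{k-1}\|^2}\le L .
\]
Here the $\nabla f(x_k)$ terms cancel, and the inequality is the upper bound in \eqref{eq:weak_convexity} with $y=x_{k-1}$. Then write
\[
\frac{2+\tau}{2}\ell_k+\tau(\alpha_k^{\mathrm{BBL}})^{-1}=\ell_k+\frac{\tau}{2}\bigl(\ell_k+2(\alpha_k^{\mathrm{BBL}})^{-1}\bigr)\le L+\frac{\tau}{2}L=\nu_L .
\]
This gives exactly $H_k\le L^2+\nu_L u-(\tau+\theta)u^2$, and from there your maximization of the concave quadratic and your induction go through as written.
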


\begin{proof}
    We first derive the upper bound. Using \eqref{eq:stepsize-adbb}, it gives $\alpha_{i+1} \le \sqrt{1+\rho_i} \alpha_i$ for all $i \ge 0$. Since $\ln(1+x) \le x$ for $x \ge 0$, we obtain $\ln \alpha_{i+1} - \ln \alpha_i \le \frac{1}{2}\ln(1+\rho_i) \le \frac{\rho_i}{2}$. Summing from $i=0, \dots, k-1$ yields $\ln \alpha_k - \ln \alpha_0 \le \frac{1}{2} \sum_{i=0}^{k-1} \rho_i \le \frac{P}{2}$. Thus, we have $\alpha_k \le \alpha_0 \exp(\frac{P}{2}) = \overline{\alpha}$.

    Next, we establish the lower bound by mathematical induction. For $k=0$, \eqref{eq:alpha_lower_bound} gives $\alpha_0 \ge \underline{\alpha}$. Suppose that, for some $k \ge 1$, $\alpha_{k-1} \ge \underline{\alpha} > 0$. We now prove that $\alpha_k \ge \underline{\alpha}$.

    It follows from \eqref{L_k}, \eqref{l_k}, and \Cref{assum:blanket} that $L_k \le L$ and $\ell_k \le L$. This along with \eqref{eq:long_bb} and the Cauchy-Schwarz inequality yields leads to
    \begin{equation}\label{eq:bbl_bound}
        (\alpha_k^{\mathrm{BBL}})^{-1} = \frac{\langle \nabla f(x_k) - \nabla f(x_{k-1}), x_k - x_{k-1} \rangle}{\|x_k - x_{k-1}\|^2} \le \frac{\|\nabla f(x_k) - \nabla f(x_{k-1})\|}{\|x_k - x_{k-1}\|} = L_k \le L.
    \end{equation}

    If $\alpha_k = \min\left\{\sqrt{1+\rho_{k-1}}\alpha_{k-1}, \frac{1}{L_k}\right\}$, then, since $\rho_{k-1} \ge 0$, it gives $\sqrt{1+\rho_{k-1}}\alpha_{k-1}\ge\alpha_{k-1}$. By \eqref{eq:alpha_lower_bound} and the fact that $\frac{\sqrt{5}-1}{2} < 1$, we have $\underline{\alpha}\le \frac1L$. This together with the induction hypothesis yields that
    \begin{equation*}
        \alpha_k \ge \min\left\{\alpha_{k-1},\tfrac{1}{L_k}\right\} \ge \min\left\{\underline{\alpha}, \tfrac{1}{L}\right\} = \underline{\alpha}.
    \end{equation*}
    
    If $\alpha_k = \min\left\{\sqrt{1+\rho_{k-1}}\alpha_{k-1}, \frac{1}{\sqrt{H_k(\tau, \theta)}}\right\}$, from \eqref{eq:stepsize-adbb}, it means that $\ell_k>0$ and $H_k(\tau, \theta) >0$. Combined \eqref{l_k} with \eqref{eq:weak_convexity}, $L_k \le L$, $\ell_k \le L$, and \eqref{eq:bbl_bound}, we have
    \begin{equation}\label{eq:combined_curvature_bound}
        \ell_k + 2(\alpha_k^{\mathrm{BBL}})^{-1} = \frac{2\left[f(x_k) - f(x_{k-1}) - \langle \nabla f(x_{k-1}), x_k - x_{k-1} \rangle\right]}{\|x_k - x_{k-1}\|^2} 
        \le \frac{L \|x_k - x_{k-1}\|^2}{\|x_k - x_{k-1}\|^2} = L.
    \end{equation}
    This together with \eqref{eq:generalized_bb_curvature} and $\nu_L = \frac{2+\tau}{2}L$ leads to
    \begin{align}
        H_k(\tau, \theta) &= L_k^2 + \frac{2+\tau}{2\alpha_{k-1}}\ell_k - \frac{\tau+\theta}{\alpha_{k-1}^2} + \frac{\tau (\alpha_k^{\mathrm{BBL}})^{-1}}{\alpha_{k-1}} 
        = L_k^2 + \frac{\ell_k}{\alpha_{k-1}} - \frac{\tau+\theta}{\alpha_{k-1}^2} + \frac{\tau\left(\ell_k + 2(\alpha_k^{\mathrm{BBL}})^{-1}\right)}{2\alpha_{k-1}} \nonumber\\
        &\le L^2 + \frac{L}{\alpha_{k-1}} - \frac{\tau+\theta}{\alpha_{k-1}^2} + \frac{\tau L}{2\alpha_{k-1}} 
        = L^2 + \frac{2+\tau}{2}L\left(\frac{1}{\alpha_{k-1}}\right) - (\tau+\theta)\left(\frac{1}{\alpha_{k-1}}\right)^2\nonumber\\
        &=L^2 + \nu_L\left(\frac{1}{\alpha_{k-1}}\right) - (\tau+\theta)\left(\frac{1}{\alpha_{k-1}}\right)^2.\label{eq:hk_intermediate_bound}
    \end{align}
    
   When $\tau=\theta=0$, we have $\nu_L = L$ and $\underline{\alpha}=\min \left\{ \alpha_0, \frac{\sqrt{5}-1}{2L} \right\}$, which gives $\underline{\alpha}L\le\frac{\sqrt{5}-1}{2}$. This implies that $\underline{\alpha}^2L^2+\underline{\alpha}L\le1$, namely, $L^2 + \frac{L}{\underline{\alpha}}\le\frac{1}{\underline{\alpha}^2}$. Combining this with \eqref{eq:hk_intermediate_bound} and the induction hypothesis yields
    \begin{equation*}
        H_k(0, 0) \le L^2 + \frac{L}{\alpha_{k-1}}
        \le L^2 + \frac{L}{\underline{\alpha}}
        \le \frac{1}{\underline{\alpha}^2}.
    \end{equation*}
    Using \eqref{eq:stepsize-adbb}, $\rho_{k-1}\ge0$ and the induction hypothesis again, it follows that
    $$
    \alpha_k \ge \min\left\{\underline{\alpha}, \tfrac{1}{\sqrt{H_k(0, 0)}}\right\} 
    =\underline{\alpha}.
    $$
    
    When $\tau+\theta > 0$, note that the univariate function $\nu_L x - (\tau+\theta)x^2$ attains its maximum value $\frac{\nu_L^2}{4(\tau+\theta)}$. With \eqref{eq:hk_intermediate_bound}, it leads to
    \begin{equation*}
        H_k(\tau, \theta) \le L^2 + \frac{\nu_L^2}{4(\tau+\theta)}.
    \end{equation*}
    From \eqref{eq:stepsize-adbb}, \eqref{eq:alpha_lower_bound}, $\rho_{k-1}\ge0$ and the induction hypothesis, we get
    \begin{equation*}
        \alpha_k \ge \min\left\{ \alpha_{k-1}, \frac{1}{\sqrt{L^2 + \frac{\nu_L^2}{4(\tau+\theta)}}} \right\} \ge \min\left\{ \underline{\alpha}, \frac{1}{\sqrt{L^2 + \frac{\nu_L^2}{4(\tau+\theta)}}} \right\} = \underline{\alpha}.
    \end{equation*}
Combining the above results and the induction hypothesis, the proof is completed.
\end{proof}

The following result characterizes the relationship between the iterative sequence and the gradient information of the objective function. 

\begin{lemma}\label{lem:lemma1}
Let $\{x_k\}$ be the sequence generated by \cref{alg:adabbnc}. Under \cref{assum:blanket}, there exists a specific subgradient $\xi_k \in \partial F(x_k)$ such that:
\begin{align*}
    \langle \xi_k, x_{k-1} - x_k \rangle &\le F(x_{k-1}) - F(x_k) + \frac{\ell_k}{2} \|x_k - x_{k-1}\|^2, \label{eq:lem1_i} \\
    \langle \xi_k, x_{k-1} - x_k \rangle &= \frac{1}{\alpha_{k-1}} \|x_k - x_{k-1}\|^2 - \langle x_k - x_{k-1}, \nabla f(x_k) - \nabla f(x_{k-1}) \rangle. 
\end{align*}
\end{lemma}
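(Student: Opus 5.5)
The plan is to build $\xi_k$ explicitly from the optimality condition of the proximal step that produced $x_k$. Since $x_k = \mathrm{prox}_{\alpha_{k-1} g}(x_{k-1} - \alpha_{k-1}\nabla f(x_{k-1}))$, first-order optimality for the strongly convex prox subproblem gives
\begin{equation*}
\eta_k := \frac{x_{k-1}-x_k}{\alpha_{k-1}} - \nabla f(x_{k-1}) \in \partial g(x_k).
\end{equation*}
We set $\xi_k := \nabla f(x_k) + \eta_k$. Because $f$ is continuously differentiable and $g$ is proper, closed and convex, the sum rule gives $\partial F(x_k) = \nabla f(x_k) + \partial g(x_k)$. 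Hence $\xi_k \in \partial F(x_k)$, and explicitly $\xi_k = \frac{1}{\alpha_{k-1}}(x_{k-1}-x_k) + \nabla f(x_k) - \nabla f(x_{k-1})$.

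The equality is a direct computation. Taking the inner product of $\xi_k$ with $x_{k-1}-x_k$ gives
\begin{equation*}
\langle \xi_k, x_{k-1}-x_k\rangle = \frac{1}{\alpha_{k-1}}\|x_k-x_{k-1}\|^2 - \langle x_k - x_{k-1}, \nabla f(x_k)-\nabla f(x_{k-1})\rangle,
\end{equation*}
which is exactly the second claim.

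For the inequality, we split $\langle \xi_k, x_{k-1}-x_k\rangle = \langle \eta_k, x_{k-1}-x_k\rangle + \langle \nabla f(x_k), x_{k-1}-x_k\rangle$ and bound each piece.
\begin{itemize}
\item By convexity of $g$ and $\eta_k\in\partial g(x_k)$, we have $\langle \eta_k, x_{k-1}-x_k\rangle \le g(x_{k-1}) - g(x_k)$.
\item For the smooth piece we use the definition \eqref{l_k} of $\ell_k$, which is an identity and needs no weak convexity. Rearranging it gives $\langle \nabla f(x_k), x_{k-1}-x_k\rangle = f(x_{k-1}) - f(x_k) + \frac{\ell_k}{2}\|x_k-x_{k-1}\|^2$.
\end{itemize}
Adding the two pieces yields $\langle \xi_k, x_{k-1}-x_k\rangle \le F(x_{k-1}) - F(x_k) + \frac{\ell_k}{2}\|x_k-x_{k-1}\|^2$, as claimed.

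There is one technical point: the convention when $x_k = x_{k-1}$, where $\ell_k$ is undefined. In that case both sides are trivially consistent, since every term multiplying $\|x_k-x_{k-1}\|$ vanishes; I would note this briefly. Apart from that, the only step needing care is justifying the subdifferential sum rule, which is standard for a $C^1$ function plus a proper closed convex one. I expect no real obstacle.
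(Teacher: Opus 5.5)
Your proposal is correct and follows essentially the same route as the paper: the same $\xi_k = \nabla f(x_k) + v_k$ with $v_k \in \partial g(x_k)$ read off from the prox optimality condition, the rearranged definition of $\ell_k$ combined with the convex subgradient inequality for $g$ to get the first claim, and direct expansion of $\langle \xi_k, x_{k-1}-x_k\rangle$ to get the equality. Your ordering, which fixes $\xi_k$ from the optimality condition before proving either claim, is slightly cleaner than the paper's, which first takes an arbitrary $v_k$ and specializes it only afterwards.
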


\begin{proof}
    It follows from \eqref{l_k} that
    \begin{equation}\label{eq:proof_lem1_1}
        f(x_{k-1}) = f(x_k) + \langle \nabla f(x_k), x_{k-1} - x_k \rangle - \frac{\ell_k}{2} \|x_k - x_{k-1}\|^2.
    \end{equation}
    Note that $g(x)$ is proper, lower semi-continuous, and convex (\cref{assum:blanket}). For any subgradient $v_k \in \partial g(x_k)$, we have
    \begin{equation}\label{eq:proof_lem1_2}
        g(x_{k-1}) \ge g(x_k) + \langle v_k, x_{k-1} - x_k \rangle.
    \end{equation}
    Since $F(x)=f(x)+g(x)$ and $\xi_k=\nabla f(x_k)+v_k \in \partial F(x_k)$, combining \eqref{eq:proof_lem1_1} and \eqref{eq:proof_lem1_2} yields that
    \begin{equation*}
        F(x_{k-1}) \ge F(x_k) + \langle \xi_k, x_{k-1} - x_k \rangle - \frac{\ell_k}{2} \|x_k - x_{k-1}\|^2,
    \end{equation*}
    which leads to the first result.

    From the first-order optimality condition and the iteration in \Cref{alg:adabbnc}(Step\,15), we get 
    $$x_{k-1} - x_k = \alpha_{k-1} (\nabla f(x_{k-1}) + v_k),$$ where $v_k \in \partial g(x_k)$. Together with $\xi_k = \nabla f(x_k) + v_k$, it gives
    \begin{align*}
        \|x_k - x_{k-1}\|^2 
        &= \langle x_{k-1} - x_k,\, \alpha_{k-1} (\nabla f(x_{k-1}) + v_k) \rangle \nonumber \\
        &= \alpha_{k-1} \langle x_{k-1} - x_k,\, \nabla f(x_k) + v_k + \nabla f(x_{k-1}) - \nabla f(x_k) \rangle \nonumber \\
        &= \alpha_{k-1} \langle x_{k-1} - x_k,\, \xi_k \rangle + \alpha_{k-1} \langle x_{k-1} - x_k,\, \nabla f(x_{k-1}) - \nabla f(x_k) \rangle,
    \end{align*}
    completing the proof since $\alpha_{k-1}>0$.
\end{proof}

We now derive a recursive inequality governing $\|x_{k+1} - x_k\|^2$, which plays a key role in the subsequent convergence analysis.

\begin{lemma}[Generalized Iterate Bound]\label{lem:unified_descent}
Let $\{x_k\}$ and $\{\alpha_k\}$ be the sequences generated by \cref{alg:adabbnc}. Under \cref{assum:blanket}, for any $\tau\ge0$ and $\theta \in [0, 1)$, the following unified bound holds for all $k \ge 1$:
\begin{equation*}\label{eq:lem2_unified}
    \|x_{k+1} - x_k\|^2 \le \alpha_k^2 H_k(\tau, \theta) \|x_k - x_{k-1}\|^2 - (1-\theta)\alpha_k^2 \|\mathcal{G}_{k-1}\|^2 + \frac{(2+\tau)\alpha_k^2}{\alpha_{k-1}}(F(x_{k-1}) - F(x_k)).
\end{equation*}
\end{lemma}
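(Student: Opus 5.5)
The plan is to bound $\|x_{k+1}-x_k\|$ by $\alpha_k$ times the norm of the subgradient $\xi_k\in\partial F(x_k)$ from \cref{lem:lemma1}, and then to expand $\|\xi_k\|^2$ in two different ways whose combination produces exactly $H_k(\tau,\theta)$. Throughout, write $d_k:=x_k-x_{k-1}$ and $\Delta_k:=\nabla f(x_k)-\nabla f(x_{k-1})$, so that $\|\Delta_k\|=L_k\|d_k\|$ and $\langle d_k,\Delta_k\rangle=(\alpha_k^{\mathrm{BBL}})^{-1}\|d_k\|^2$.

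\textbf{Step 1: prox comparison.} Let $v_k\in\partial g(x_k)$ be the element from the proof of \cref{lem:lemma1}, and set $\xi_k=\nabla f(x_k)+v_k$. The optimality condition of the prox gives $x_k=\mathrm{prox}_{\alpha_k g}(x_k+\alpha_k v_k)$. By definition, $x_{k+1}=\mathrm{prox}_{\alpha_k g}(x_k-\alpha_k\nabla f(x_k))$. Nonexpansiveness of $\mathrm{prox}_{\alpha_k g}$ (from convexity of $g$) then yields
\[
\|x_{k+1}-x_k\|^2\le \alpha_k^2\|\xi_k\|^2 .
\]
It therefore suffices to show
\[
\|\xi_k\|^2\le H_k(\tau,\theta)\|d_k\|^2-(1-\theta)\|\mathcal{G}_{k-1}\|^2+\frac{2+\tau}{\alpha_{k-1}}\big(F(x_{k-1})-F(x_k)\big).
\]

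\textbf{Step 2: two expressions for $\|\xi_k\|^2$.} From $x_{k-1}-x_k=\alpha_{k-1}(\nabla f(x_{k-1})+v_k)$ and \eqref{eq:gradient_mapping} we get $\xi_k=\mathcal{G}_{k-1}+\Delta_k$.

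(A) Expanding directly and using $\langle \mathcal{G}_{k-1},\Delta_k\rangle=-\alpha_{k-1}^{-1}\langle d_k,\Delta_k\rangle$ gives the identity
\[
\|\xi_k\|^2=\|\mathcal{G}_{k-1}\|^2-\frac{2}{\alpha_{k-1}}(\alpha_k^{\mathrm{BBL}})^{-1}\|d_k\|^2+L_k^2\|d_k\|^2 .
\]

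(B) Alternatively, $\|\xi_k\|^2=\|\Delta_k\|^2+2\langle \xi_k,\mathcal{G}_{k-1}\rangle-\|\mathcal{G}_{k-1}\|^2$ with $\langle\xi_k,\mathcal{G}_{k-1}\rangle=\alpha_{k-1}^{-1}\langle\xi_k,x_{k-1}-x_k\rangle$. The first inequality of \cref{lem:lemma1} then gives
\[
\|\xi_k\|^2\le L_k^2\|d_k\|^2-\|\mathcal{G}_{k-1}\|^2+\frac{2}{\alpha_{k-1}}\big(F(x_{k-1})-F(x_k)\big)+\frac{\ell_k}{\alpha_{k-1}}\|d_k\|^2 .
\]

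\textbf{Step 3: combination.} Since (A) is an equality, I can combine $\tfrac{2+\tau}{2}\cdot$(B) with $\left(-\tfrac{\tau}{2}\right)\cdot$(A); the negative weight is legitimate. The weights sum to $1$, so the left-hand side remains $\|\xi_k\|^2$. The coefficients come out as follows:
\begin{itemize}
\item $L_k^2$ appears with total coefficient $1$;
\item $\|\mathcal{G}_{k-1}\|^2$ appears with coefficient $-(1+\tau)$;
\item the BB term is $\tfrac{\tau}{\alpha_{k-1}}(\alpha_k^{\mathrm{BBL}})^{-1}\|d_k\|^2$;
\item the curvature term is $\tfrac{2+\tau}{2\alpha_{k-1}}\ell_k\|d_k\|^2$;
\item the function-decrease term is $\tfrac{2+\tau}{\alpha_{k-1}}(F(x_{k-1})-F(x_k))$.
\end{itemize}
Finally, split $-(1+\tau)\|\mathcal{G}_{k-1}\|^2=-(1-\theta)\|\mathcal{G}_{k-1}\|^2-(\tau+\theta)\|d_k\|^2/\alpha_{k-1}^2$, using $\|\mathcal{G}_{k-1}\|=\|d_k\|/\alpha_{k-1}$. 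Collecting the $\|d_k\|^2$ terms reproduces exactly $H_k(\tau,\theta)$ from \eqref{eq:generalized_bb_curvature}. Multiplying by $\alpha_k^2$ and invoking Step 1 finishes the proof.

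The main obstacle is Step 3: spotting that the target coefficient $(2+\tau)/\alpha_{k-1}$ of $F(x_{k-1})-F(x_k)$ forces the weights $\tfrac{2+\tau}{2}$ and $-\tfrac{\tau}{2}$. One also has to check that a negative weight on the equality (A) is allowed and that the $\theta$-splitting of $\|\mathcal{G}_{k-1}\|^2$ produces the $-(\tau+\theta)/\alpha_{k-1}^2$ term. Everything else is routine bookkeeping, and no sign condition on $\ell_k$ or $H_k$ is needed, so the bound holds for all $k\ge1$ irrespective of which branch of \eqref{eq:stepsize-adbb} is taken.
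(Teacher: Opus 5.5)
Your proof is correct and is essentially the paper's argument. Step 1 together with (B) is exactly the base inequality \eqref{eq:lem2_base}, which the paper imports from Yagishita and Ito rather than deriving from nonexpansiveness of $\mathrm{prox}_{\alpha_k g}$ as you do. Your combination $\tfrac{2+\tau}{2}\cdot(\mathrm{B})-\tfrac{\tau}{2}\cdot(\mathrm{A})$ equals (B) plus $\tfrac{\tau}{\alpha_{k-1}}$ times the nonnegative quantity in \eqref{eq:lem2_nonneg}, which is precisely what the paper adds before making the same $\theta$-splitting of $\|\mathcal{G}_{k-1}\|^2$.
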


\begin{proof}
   It follows from the descent property of the proximal gradient step (see \cite[Lemma 4]{yagishitaSimpleLinesearchfreeFirstorder2025a}) that
    \begin{equation}\label{eq:lem2_base}
        \|x_{k+1}-x_k\|^2 \le \alpha_k^2 L_k^2 \|x_k - x_{k-1}\|^2 - \alpha_k^2 \|\mathcal{G}_{k-1}\|^2 + \frac{2\alpha_k^2}{\alpha_{k-1}} \left( F(x_{k-1}) - F(x_k) + \frac{\ell_k}{2}\|x_k - x_{k-1}\|^2 \right).
    \end{equation}    
    Using $\theta\in[0,1)$ and \eqref{eq:gradient_mapping}, we obtain
    \begin{equation*}\label{eq:lem2_split}
        \|\mathcal{G}_{k-1}\|^2 
         =(1-\theta) \|\mathcal{G}_{k-1}\|^2 + \theta \|\mathcal{G}_{k-1}\|^2
        = (1-\theta) \|\mathcal{G}_{k-1}\|^2 + \frac{\theta} {\alpha_{k-1}^2} \|x_k - x_{k-1}\|^2.
    \end{equation*}
    Substituting this into \eqref{eq:lem2_base} yields that
        \begin{equation}\label{eq:lem2_base2}
        \begin{array}{ll}
        \|x_{k+1}-x_k\|^2 &\le 
        \alpha_k^2 \left(L_k^2- \frac{\theta} {\alpha_{k-1}^2}\right) \|x_k - x_{k-1}\|^2 
        -(1-\theta)\alpha_k^2 \|\mathcal{G}_{k-1}\|^2
        - \alpha_k^2 \|\mathcal{G}_{k-1}\|^2 \\[6pt]
        &\quad + \frac{2\alpha_k^2}{\alpha_{k-1}} \left( F(x_{k-1}) - F(x_k) + \frac{\ell_k}{2}\|x_k - x_{k-1}\|^2 \right).
        \end{array}
    \end{equation}
    By \cref{lem:lemma1}, we get
    $  \frac{1}{\alpha_{k-1}} \|x_k - x_{k-1}\|^2 - \langle x_k - x_{k-1}, \nabla f(x_k) - \nabla f(x_{k-1}) \rangle =\langle \xi_k, x_{k-1} - x_k \rangle
    \le F(x_{k-1}) - F(x_k) + \frac{\ell_k}{2} \|x_k - x_{k-1}\|^2$. Along with \eqref{eq:long_bb}, it leads to
    \begin{equation}\label{eq:lem2_nonneg}
        0 \le F(x_{k-1}) - F(x_k) + \frac{\ell_k}{2}\|x_k - x_{k-1}\|^2 - \frac{1}{\alpha_{k-1}}\|x_k - x_{k-1}\|^2 + (\alpha_k^{\mathrm{BBL}})^{-1} \|x_k - x_{k-1}\|^2.
    \end{equation}
    Note that $\tau \ge 0$ and $\alpha_k>0$. Multiplying \eqref{eq:lem2_nonneg} by $\frac{\tau \alpha_k^2}{\alpha_{k-1}}$ and adding the resulting inequality to \eqref{eq:lem2_base2} gives
    \begin{align*}
        \|x_{k+1}-x_k\|^2 &\le \alpha_k^2 \left[ L_k^2 + \frac{\ell_k}{\alpha_{k-1}} + \frac{\tau \ell_k}{2\alpha_{k-1}} - \frac{\tau}{\alpha_{k-1}^2} - \frac{\theta}{\alpha_{k-1}^2} + \frac{\tau}{\alpha_{k-1}}(\alpha_k^{\mathrm{BBL}})^{-1} \right] \|x_k - x_{k-1}\|^2 \\
        &\quad - (1-\theta)\alpha_k^2 \|\mathcal{G}_{k-1}\|^2 + \frac{(2+\tau)\alpha_k^2}{\alpha_{k-1}} (F(x_{k-1}) - F(x_k)) \\
        &= \alpha_k^2 \left[ L_k^2 + \frac{2+\tau}{2\alpha_{k-1}}\ell_k - \frac{\tau+\theta}{\alpha_{k-1}^2} + \frac{\tau}{\alpha_{k-1}}(\alpha_k^{\mathrm{BBL}})^{-1} \right] \|x_k - x_{k-1}\|^2 \\
        &\quad - (1-\theta)\alpha_k^2 \|\mathcal{G}_{k-1}\|^2 + \frac{(2+\tau)\alpha_k^2}{\alpha_{k-1}} (F(x_{k-1}) - F(x_k))\\
        &=\alpha_k^2 H_k(\tau, \theta) \|x_k - x_{k-1}\|^2 - (1-\theta)\alpha_k^2 \|\mathcal{G}_{k-1}\|^2 + \frac{(2+\tau) \alpha_k^2}{\alpha_{k-1}} (F(x_{k-1}) - F(x_k)),
    \end{align*}
    where the last equality holds by \eqref{eq:generalized_bb_curvature}. 
\end{proof}




A sufficient condition for bounding $\alpha_k^2 H_k(\tau,\theta)$, which appears as a coefficient in the recursive inequality of \Cref{lem:unified_descent}, is established below.

\begin{lemma}\label{lem:uniform_curvature_bound}
    Let $\{x_k\}$ and $\{\alpha_k\}$ be the sequences generated by \cref{alg:adabbnc}. Under \cref{assum:blanket}, for any $\tau \ge 0$ and $\theta \in [0,1)$ satisfying 
    \begin{equation}\label{eq:tau_upper_bound}
        0 \le \tau \le \frac{2\theta}{[\overline{\alpha} L - 2]_+},
    \end{equation}
    it follows that $\alpha_k^2 H_k(\tau, \theta) \le 1$ for all $k \ge 1$, where $\overline{\alpha}$ is the upper bound of $\alpha_k$ defined in \cref{prop:stepsize_bound}.
\end{lemma}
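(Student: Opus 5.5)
The plan is a case analysis following the branches of the stepsize rule \eqref{eq:stepsize-adbb}. In each branch I will show directly that $\alpha_k^2 H_k(\tau,\theta)\le 1$, using only the definitions and the curvature bound \eqref{eq:combined_curvature_bound} from the proof of \cref{prop:stepsize_bound}. The restriction \eqref{eq:tau_upper_bound} on $\tau$ is needed in only one branch.

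\textbf{Branch 1: $\ell_k>0$ and $H_k(\tau,\theta)>0$.} Here $\alpha_k=\min\{\sqrt{1+\rho_{k-1}}\alpha_{k-1},1/\sqrt{H_k(\tau,\theta)}\}\le 1/\sqrt{H_k(\tau,\theta)}$, so $\alpha_k^2H_k(\tau,\theta)\le 1$ immediately.

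\textbf{Branch 2: $\ell_k>0$ and $H_k(\tau,\theta)\le 0$.} Then $\alpha_k^2H_k(\tau,\theta)\le 0\le 1$, and again nothing needs to be proved.

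\textbf{Branch 3: $\ell_k\le 0$.} Here $\alpha_k\le 1/L_k$, and if $H_k(\tau,\theta)\le 0$ we are done. Otherwise it suffices to show $H_k(\tau,\theta)\le L_k^2$, since then $\alpha_k^2H_k(\tau,\theta)\le \alpha_k^2L_k^2\le 1$. As in the proof of \cref{prop:stepsize_bound}, I rewrite
\begin{equation*}
H_k(\tau,\theta)=L_k^2+\frac{\ell_k}{\alpha_{k-1}}-\frac{\tau+\theta}{\alpha_{k-1}^2}+\frac{\tau\bigl(\ell_k+2(\alpha_k^{\mathrm{BBL}})^{-1}\bigr)}{2\alpha_{k-1}}.
\end{equation*}
The term $\ell_k/\alpha_{k-1}$ is nonpositive because $\ell_k\le0$. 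By \eqref{eq:combined_curvature_bound}, $\ell_k+2(\alpha_k^{\mathrm{BBL}})^{-1}\le L$. Note that \eqref{eq:combined_curvature_bound} is only the $L$-smoothness upper bound \eqref{eq:weak_convexity} and does not use $\ell_k>0$, so it holds in this branch too. Hence
\begin{equation*}
H_k(\tau,\theta)\le L_k^2+\frac{1}{\alpha_{k-1}^2}\Bigl(\tfrac{\tau L\alpha_{k-1}}{2}-\tau-\theta\Bigr).
\end{equation*}
It therefore remains to verify $\tau(\alpha_{k-1}L-2)\le 2\theta$. By \cref{prop:stepsize_bound}, $\alpha_{k-1}\le\overline{\alpha}$, so $\tau(\alpha_{k-1}L-2)\le\tau[\overline{\alpha}L-2]_+\le 2\theta$ by \eqref{eq:tau_upper_bound}. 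When $[\overline{\alpha}L-2]_+=0$, the convention $c/0=+\infty$ leaves $\tau$ unrestricted, and the inequality holds trivially because the left side is $\le 0$.

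The only genuinely delicate point is Branch 3. There the BB term $\frac{\tau}{\alpha_{k-1}}(\alpha_k^{\mathrm{BBL}})^{-1}$ is not controlled by the stepsize choice, which only enforces $\alpha_k\le 1/L_k$. The key is to pair this term with $\ell_k$ so that \eqref{eq:combined_curvature_bound} applies, and then absorb it into the negative term $-(\tau+\theta)/\alpha_{k-1}^2$ via the upper bound $\overline{\alpha}$; this is exactly where \eqref{eq:tau_upper_bound} enters.

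Finally, I will remark on the degenerate case $x_k=x_{k-1}$. There, under the convention $c/0=+\infty$ together with a consistent convention for the curvature estimates, the recursion in \cref{lem:unified_descent} multiplies $H_k$ by $\|x_k-x_{k-1}\|^2=0$. So this case can be excluded or treated by convention without affecting the later use of the bound.
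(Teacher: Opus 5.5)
Your proposal is correct and follows essentially the same route as the paper. It uses the same case split along the branches of \eqref{eq:stepsize-adbb}, the same rewriting of $H_k(\tau,\theta)$ combined with \eqref{eq:combined_curvature_bound} when $\ell_k\le 0$, and the same use of $\alpha_{k-1}\le\overline{\alpha}$ together with \eqref{eq:tau_upper_bound}. Your way of closing that case, by showing $H_k(\tau,\theta)\le L_k^2$ and then using $\alpha_k L_k\le 1$, is slightly cleaner than the paper's displayed chain, which divides by $L_k^2$ and replaces $\alpha_k^2$ by $1/L_k^2$ in a product whose sign has not been fixed.
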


\begin{proof}
    When $\alpha_k = \min\left\{\sqrt{1+\rho_{k-1}}\alpha_{k-1}, \frac{1}{\sqrt{H_k(\tau, \theta)}}\right\}$, it is clear that $\alpha_k\le \frac{1}{\sqrt{H_k(\tau, \theta)}}$. In this case, the desired result follows immediately. When $\alpha_k = \min\left\{\sqrt{1+\rho_{k-1}}\alpha_{k-1}, \frac{1}{L_k}\right\}$, by \eqref{eq:stepsize-adbb}, the proof proceeds by considering the following two cases separately.

    If $H_k(\tau,\theta) \le 0$, then, noting that $\alpha_k > 0$, the result holds naturally. If $\ell_k \le 0$, with \eqref{eq:generalized_bb_curvature} and \eqref{eq:combined_curvature_bound}, we have
    \begin{equation*}
        H_k(\tau, \theta) = L_k^2 + \frac{\ell_k}{\alpha_{k-1}} - \frac{\tau+\theta}{\alpha_{k-1}^2} + \frac{\tau(\ell_k + 2(\alpha_k^{\mathrm{BBL}})^{-1})}{2\alpha_{k-1}}
        \le L_k^2 - \frac{\tau+\theta}{\alpha_{k-1}^2} + \frac{\tau L}{2\alpha_{k-1}}.
    \end{equation*}
    This along with \cref{prop:stepsize_bound} and the fact that $0< \alpha_k \le 1/L_k$ leads to 
    $$
    \alpha_k^2H_k(\tau, \theta)\le 1-\frac{2(\tau+\theta) - \alpha_{k-1}\tau L}{2\alpha_{k-1}^2L_k^2}
    \le 1-\frac{2(\tau+\theta) -  \overline{\alpha}\tau L}{2\alpha_{k-1}^2L_k^2}
    = 1-\frac{2\theta-(\overline{\alpha}L-2)\tau}{2\alpha_{k-1}^2L_k^2}.
    $$
    Note that \eqref{eq:tau_upper_bound} yields $2\theta-(\overline{\alpha}L-2)\tau \ge0$. Then the result follows immediately.   

\end{proof}

\begin{remark}\label{rem-reasonable}
    We emphasize that the parameter restriction \eqref{eq:tau_upper_bound} is reasonable. Particularly, when $\overline{\alpha}L\le 2$, \eqref{eq:tau_upper_bound} becomes nonrestrictive and reduces to $0\le \tau < +\infty$. Moreover, the condition $\overline{\alpha}L\le 2$ is mild. Indeed, if we choose $\alpha_0 = \frac{1}{L}$ and $\rho_k = \ln 2 \cdot \left(\frac{1}{2}\right)^k$, then $P = \sum_{k=0}^{\infty} \rho_k = 2 \ln 2$ and
    \begin{equation*}
        \overline{\alpha} = \alpha_0 \exp( \frac{P}{2}) = \frac{\exp(\ln 2) }{L} = \frac{2}{L},
    \end{equation*}
    which gives $\overline{\alpha}L=2$.
    \end{remark}

With \cref{lem:unified_descent}, we obtain a crucial inequality for controlling the distance between successive iterates. However, a major difficulty in analyzing nonconvex adaptive methods lies in the fact that the objective sequence ${F(x_k)}$ is not necessarily monotonically decreasing. As done in \cite{yeSimpleAdaptiveProximal2025a}, we construct a Lyapunov function that incorporates the growth-rate regulator $\rho_k$. First, following \cite{yeSimpleAdaptiveProximal2025a}, we define the weighting sequence:
\begin{equation}\label{eq:omega_def}
    \omega_k := \frac{\alpha_k^2}{\alpha_0^2 \prod_{i=1}^k (1+\rho_{i-1}) \prod_{i=1}^k \sqrt{1+\rho_{i-2}}}, \quad \forall~k \ge 1,
\end{equation}
where $\omega_0 = 1$ and $\rho_{-1} = 0$. Since $\alpha_k \ge \underline{\alpha}$ (as established in \cref{prop:stepsize_bound}), $\omega_k$ admits a  positive lower bound:
\begin{equation}\label{omega_bound}
    \omega_k \ge \underline{\omega} := \frac{\underline{\alpha}^2}{\alpha_0^2} \exp\left(-\frac{3P}{2}\right) > 0.
\end{equation}
On this basis, we introduce the following Lyapunov function.

\begin{definition}\label{def:lyapunov}
For any $k \ge 1$, we define an auxiliary sequence $E_k$ and the corresponding Lyapunov function $\mathcal{E}_k$ as
\begin{equation*}
    E_k := F(x_k) - F_* + \frac{\alpha_{k-1}}{(2+\tau)\alpha_k^2} \|x_{k+1} - x_k\|^2, \quad \text{and} \quad \mathcal{E}_k := \omega_k E_k,
\end{equation*}
where $F_* = \inf_x F(x) > -\infty$, $\tau \ge 0$, and $\omega_k$ is defined in \eqref{eq:omega_def}. 
\end{definition}

Note that $F(x_k) \ge F_*$ and $\omega_k > 0$, which implies that $E_k\ge0$ and $\mathcal{E}_k \ge 0$ for all $k \ge 1$. Then we have the following recursive inequality for $\mathcal{E}_k$.

\begin{lemma}\label{lem:lyapunov_descent}
Let $\{x_k\}$ and $\{\alpha_k\}$ be the sequences generated by \cref{alg:adabbnc}. Under \cref{assum:blanket}, for any $\tau \ge 0$ and $\theta \in [0,1)$ satisfying \eqref{eq:tau_upper_bound}, the following descent inequality holds for all $k \ge 1$:
\begin{equation}\label{eq:lyapunov_descent}
    \mathcal{E}_k \le \mathcal{E}_{k-1} -  \frac{1-\theta}{2+\tau} \omega_k\alpha_{k-1} \|\mathcal{G}_{k-1}\|^2,
\end{equation}
where $\rho_{-1} = 0$, and $\mathcal{E}_k$ and $\omega_k$ are defined in \cref{def:lyapunov} and \eqref{eq:omega_def}, respectively.
\end{lemma}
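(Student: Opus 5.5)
Starting point: \Cref{lem:unified_descent} combined with \Cref{lem:uniform_curvature_bound}, which gives $\alpha_k^2 H_k(\tau,\theta)\le 1$. Together they yield
\begin{equation*}
\|x_{k+1}-x_k\|^2 \le \|x_k-x_{k-1}\|^2 - (1-\theta)\alpha_k^2\|\mathcal{G}_{k-1}\|^2 + \frac{(2+\tau)\alpha_k^2}{\alpha_{k-1}}\big(F(x_{k-1})-F(x_k)\big).
\end{equation*}
One caveat: when $H_k\le 0$ I use $\alpha_k^2H_k\|x_k-x_{k-1}\|^2\le 0\le\|x_k-x_{k-1}\|^2$ directly, so the bound holds in every case.

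\textbf{Normalize the inequality.} Multiply through by $\frac{\alpha_{k-1}}{(2+\tau)\alpha_k^2}>0$ and move $F(x_k)-F_*$ to the left. This gives
\begin{equation*}
E_k \le F(x_{k-1})-F_* + \frac{\alpha_{k-1}}{(2+\tau)\alpha_k^2}\|x_k-x_{k-1}\|^2 - \frac{1-\theta}{2+\tau}\alpha_{k-1}\|\mathcal{G}_{k-1}\|^2 .
\end{equation*}
The right-hand side must now be compared with $E_{k-1}=F(x_{k-1})-F_*+\frac{\alpha_{k-2}}{(2+\tau)\alpha_{k-1}^2}\|x_k-x_{k-1}\|^2$. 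For $k=1$, $E_0$ and $\alpha_{-1}$ are not defined by \cref{def:lyapunov}. I would treat $k=1$ by the convention $\rho_{-1}=0$, setting $\alpha_{-1}:=\alpha_0$ (so that $\omega_0=1$ is consistent) and defining $E_0$ by the same formula.

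\textbf{Compare the coefficients.} Write $c_k:=\frac{\alpha_{k-1}}{\alpha_k^2}$. From the stepsize rule, $\alpha_k^2\le(1+\rho_{k-1})\alpha_{k-1}^2$ and $\alpha_{k-1}\le\sqrt{1+\rho_{k-2}}\,\alpha_{k-2}$. The coefficient ratio $c_k/c_{k-1}=\frac{\alpha_{k-1}^3}{\alpha_k^2\alpha_{k-2}}$ is not bounded by $1$ in general, so it cannot be controlled directly. Instead, multiply by $\omega_k$ and use the product structure
\begin{equation*}
\frac{\omega_k}{\omega_{k-1}}=\frac{\alpha_k^2}{\alpha_{k-1}^2(1+\rho_{k-1})\sqrt{1+\rho_{k-2}}}\le \frac{1}{\sqrt{1+\rho_{k-2}}}\le 1 .
\end{equation*}
This handles the $F$ term: $\omega_k(F(x_{k-1})-F_*)\le\omega_{k-1}(F(x_{k-1})-F_*)$, since $F(x_{k-1})-F_*\ge 0$.

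For the distance term I need $\omega_k c_k\le\omega_{k-1}c_{k-1}$. Substituting the definition of $\omega_k$ reduces this to $\frac{\alpha_{k-1}}{(1+\rho_{k-1})\sqrt{1+\rho_{k-2}}}\le\frac{\alpha_{k-2}}{1}$, up to the exact bookkeeping of the products. This follows from $\alpha_{k-1}\le\sqrt{1+\rho_{k-2}}\,\alpha_{k-2}$ together with $1+\rho_{k-1}\ge1$. The weights in \eqref{eq:omega_def} (the factor $1+\rho_{i-1}$ and the factor $\sqrt{1+\rho_{i-2}}$) appear to have been designed precisely so that this cancellation works.

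Putting the pieces together gives $\mathcal{E}_k\le\mathcal{E}_{k-1}-\frac{1-\theta}{2+\tau}\omega_k\alpha_{k-1}\|\mathcal{G}_{k-1}\|^2$, which is \eqref{eq:lyapunov_descent}. The main obstacle is getting the index bookkeeping in the $\omega_k c_k$ versus $\omega_{k-1}c_{k-1}$ comparison exactly right, including the $k=1$ boundary convention. If the product indices do not match as expected, I would recheck which of the two ratio bounds, $\alpha_k/\alpha_{k-1}$ or $\alpha_{k-1}/\alpha_{k-2}$, each product absorbs.
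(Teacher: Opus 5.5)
Your proposal is correct and is essentially the paper's proof. It has the same starting inequality from \cref{lem:unified_descent} and \cref{lem:uniform_curvature_bound}, the same normalization to $E_k$, and the same two stepsize bounds $\alpha_k^2\le(1+\rho_{k-1})\alpha_{k-1}^2$ and $\alpha_{k-1}\le\sqrt{1+\rho_{k-2}}\,\alpha_{k-2}$ absorbed by $\omega_{k-1}/\omega_k$; the only difference is that you compare the $F$-term and the distance term separately, via $\omega_kc_k/(\omega_{k-1}c_{k-1})=\alpha_{k-1}/\bigl(\alpha_{k-2}(1+\rho_{k-1})\sqrt{1+\rho_{k-2}}\bigr)\le 1$, which is exact and needs no further bookkeeping, instead of the paper's case split on $c_k\le c_{k-1}$ versus $c_k>c_{k-1}$, and your convention $\alpha_{-1}:=\alpha_0$ at $k=1$ is the one the paper tacitly relies on, since its argument uses $\alpha_{k-2}$ and $\mathcal{E}_0$ there without defining them.
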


\begin{proof}
    
     Combining \cref{lem:unified_descent} with \cref{lem:uniform_curvature_bound}, we have
    \begin{equation*}
        \|x_{k+1} - x_k\|^2 \le \|x_k - x_{k-1}\|^2 - (1-\theta)\alpha_k^2 \|\mathcal{G}_{k-1}\|^2 + \frac{(2+\tau)\alpha_k^2}{\alpha_{k-1}}(F(x_{k-1}) - F(x_k)).
    \end{equation*}
   Dividing both sides by $\frac{(2+\tau)\alpha_k^2}{\alpha_{k-1}}$ and rearranging the resulting inequality yields
    $$
    F(x_k)-F_*+\frac{\alpha_{k-1}}{(2+\tau)\alpha_k^2}\|x_{k+1} - x_k\|^2 \le F(x_{k-1}) - F_*+\frac{\alpha_{k-1}}{(2+\tau)\alpha_k^2}\|x_k - x_{k-1}\|^2 - \frac{1-\theta}{2+\tau}\alpha_{k-1}\|\mathcal{G}_{k-1}\|^2.
    $$
    By the definition of $E_k$, it leads to
    \begin{equation}\label{eq:base_ek}
        E_k \le F(x_{k-1}) - F_* + \frac{\alpha_{k-1}}{(2+\tau)\alpha_k^2}\|x_k - x_{k-1}\|^2 - \frac{1-\theta}{2+\tau}\alpha_{k-1}\|\mathcal{G}_{k-1}\|^2.
    \end{equation}
    In the following, we further show that 
    \begin{equation}\label{eq:ek_intermediate}
        E_k \le \frac{\alpha_{k-1}^2}{\alpha_k^2}(1+\rho_{k-1})\sqrt{1+\rho_{k-2}}\,E_{k-1} - \frac{1-\theta}{2+\tau}\alpha_{k-1}\|\mathcal{G}_{k-1}\|^2.
    \end{equation}
    by considering two cases based on the value of $\frac{\alpha_{k-1}}{\alpha_k^2}$. 
    
    If $\frac{\alpha_{k-1}}{\alpha_k^2} \le \frac{\alpha_{k-2}}{\alpha_{k-1}^2}$, then it follows from \eqref{eq:base_ek} and \cref{def:lyapunov} that
    \begin{equation*}
            E_k \le  E_{k-1}- \frac{1-\theta}{2+\tau}\alpha_{k-1}\|\mathcal{G}_{k-1}\|^2.
    \end{equation*}
    According to \eqref{eq:stepsize-adbb}, we have $\alpha_k^2 \le \alpha_{k-1}^2(1+\rho_{k-1})$, which implies that $\frac{\alpha_{k-1}^2}{\alpha_k^2}(1+\rho_{k-1}) \ge 1$.     Since $E_{k-1} \ge 0$ and $\rho_{k-2}\ge0$, it gives $E_{k-1}\le \frac{\alpha_{k-1}^2}{\alpha_k^2}(1+\rho_{k-1})\sqrt{1+\rho_{k-2}}$. Along with the above inequalities yields \eqref{eq:ek_intermediate}. 
    
    If $\frac{\alpha_{k-1}}{\alpha_k^2} > \frac{\alpha_{k-2}}{\alpha_{k-1}^2}$, then we have $\frac{\alpha_{k-1}^3}{\alpha_k^2 \alpha_{k-2}} > 1$. Combined with $F(x_{k-1}) - F_* \ge 0$ and \Cref{def:lyapunov}, it yields that
    \begin{align}
       & F(x_{k-1}) - F_* + \frac{\alpha_{k-1}}{(2+\tau)\alpha_k^2}\|x_k - x_{k-1}\|^2
    \le \frac{\alpha_{k-1}^3}{\alpha_k^2 \alpha_{k-2}}(F(x_{k-1}) - F_*) + \frac{\alpha_{k-1}}{(2+\tau)\alpha_k^2}\|x_k - x_{k-1}\|^2\nonumber\\
    & =\frac{\alpha_{k-1}^3}{\alpha_k^2 \alpha_{k-2}} \left( F(x_{k-1}) - F_* + \frac{\alpha_{k-2}}{(2+\tau)\alpha_{k-1}^2}\|x_k - x_{k-1}\|^2 \right)
    = \frac{\alpha_{k-1}^3}{\alpha_k^2 \alpha_{k-2}} E_{k-1}.
    \label{eq:ratio2}
    \end{align}
    Note that \eqref{eq:stepsize-adbb} implies $\frac{\alpha_{k-1}}{\alpha_{k-2}} \le \sqrt{1+\rho_{k-2}}$. This together with $\rho_{k-1}\ge0$ follows that
    $$
    \frac{\alpha_{k-1}^3}{\alpha_k^2 \alpha_{k-2}} 
    \le \frac{\alpha_{k-1}^2}{\alpha_k^2 } \sqrt{1+\rho_{k-2}}
    \le \frac{\alpha_{k-1}^2}{\alpha_k^2 }(1+\rho_{k-1}) \sqrt{1+\rho_{k-2}}.
    $$
Substituting this into \eqref{eq:ratio2} and combining it with \eqref{eq:base_ek}, we obtain \eqref{eq:ek_intermediate}.

Now we complete the proof using the definition of $\omega_k$ in \eqref{eq:omega_def} and the identity $$\frac{\omega_{k-1}}{\omega_k} = \frac{\alpha_{k-1}^2}{\alpha_k^2}(1+\rho_{k-1})\sqrt{1+\rho_{k-2}}.$$
Multiplying \eqref{eq:ek_intermediate} by $\omega_k$ and substituting $\mathcal{E}_k = \omega_k E_k$ yields \eqref{eq:lyapunov_descent}.
\end{proof}

We complete the analysis by presenting the following global convergence result.

\begin{theorem}\label{thm:convergence_rate1}
Let $\{x_k\}$ and $\{\alpha_k\}$ be the sequences generated by \cref{alg:adabbnc}. Under \cref{assum:blanket}, for any $\tau \ge 0$ and $\theta \in [0,1)$ satisfying \eqref{eq:tau_upper_bound}, we have $\lim_{k \to \infty} \|\mathcal{G}_{\overline{\alpha}}(x_k)\| = 0$, and the following bound on the gradient mapping norm holds for any integer $N \ge 1$:
    \begin{equation*}
        \min_{1 \le k \le N} \|\mathcal{G}_{\overline{\alpha}}(x_{k-1})\|^2 \le \min_{1 \le k \le N} \|\mathcal{G}_{k-1}\|^2 \le \frac{(2+\tau)\mathcal{E}_0}{(1-\theta)\underline{\omega} \, \underline{\alpha} N}.
    \end{equation*}
Here $\overline{\alpha}$ and $\underline{\alpha}$ are the stepsize bounds established in \cref{prop:stepsize_bound}, and $\underline{\omega} > 0$ is the lower bound of the weighting sequence in \eqref{omega_bound}. 

Moreover, for any $\epsilon > 0$, the number of iterations required to generate a point $\hat{x}$ satisfying $\|\mathcal{G}_{\overline{\alpha}}(\hat{x})\| \le \epsilon$ is at most $\lceil\frac{(2+\tau)\mathcal{E}_0}{(1-\theta)\underline{\omega} \, \underline{\alpha} \epsilon^2}\rceil$, which is of order $\mathcal{O}(\epsilon^{-2})$.

\end{theorem}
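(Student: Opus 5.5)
The plan is to telescope the descent inequality of \cref{lem:lyapunov_descent} and then compare gradient mappings at different stepsizes. Summing \eqref{eq:lyapunov_descent} over $k=1,\dots,N$ gives
\begin{equation*}
\frac{1-\theta}{2+\tau}\sum_{k=1}^N \omega_k\alpha_{k-1}\|\mathcal{G}_{k-1}\|^2 \le \mathcal{E}_0-\mathcal{E}_N\le \mathcal{E}_0,
\end{equation*}
using $\mathcal{E}_N\ge 0$. Here $\mathcal{E}_0=\omega_0E_0$ has to be read with the natural convention for $k=0$, namely the quantity appearing on the right of \eqref{eq:lyapunov_descent} at $k=1$. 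I would check that the $k=1$ case of the lemma is consistent with the convention $\rho_{-1}=0$ and with a suitable $\alpha_{-1}$. This is a bookkeeping point that I expect to be harmless but that should be stated.

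Next, I bound each weight from below by $\omega_k\alpha_{k-1}\ge \underline{\omega}\,\underline{\alpha}$, using \eqref{omega_bound} and \cref{prop:stepsize_bound}. Replacing the sum by $N$ times its minimum then gives
\begin{equation*}
\min_{1\le k\le N}\|\mathcal{G}_{k-1}\|^2\le \frac{(2+\tau)\mathcal{E}_0}{(1-\theta)\underline{\omega}\,\underline{\alpha}N}.
\end{equation*}
Moreover, since the series $\sum_k \|\mathcal{G}_{k-1}\|^2$ is bounded by $(2+\tau)\mathcal{E}_0/((1-\theta)\underline{\omega}\,\underline{\alpha})<\infty$, its terms tend to zero, so $\|\mathcal{G}_k\|\to 0$.

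The key remaining step is the comparison $\|\mathcal{G}_{\overline{\alpha}}(x)\|\le \|\mathcal{G}_{\alpha}(x)\|$ for $\alpha\le\overline{\alpha}$. This is the standard monotonicity of the gradient mapping: $\alpha\mapsto\|\mathcal{G}_\alpha(x)\|$ is nonincreasing, while $\alpha\mapsto \alpha\|\mathcal{G}_\alpha(x)\|$ is nondecreasing. It holds because $g$ is convex, and can be cited, for instance, from Beck's book, Theorem 10.9. I would quote it rather than reprove it; if needed, it follows from firm nonexpansiveness of the prox, which yields the monotonicity of $\alpha\mapsto\|\mathcal{G}_\alpha(x)\|$. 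Applying it with $x=x_{k-1}$ and $\alpha=\alpha_{k-1}\le\overline{\alpha}$ gives $\|\mathcal{G}_{\overline{\alpha}}(x_{k-1})\|\le\|\mathcal{G}_{k-1}\|$. Taking minima yields the displayed chain of inequalities, and the limit $\|\mathcal{G}_{\overline{\alpha}}(x_k)\|\to0$ follows from $\|\mathcal{G}_k\|\to0$.

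Finally, for the complexity claim, set $N=\lceil (2+\tau)\mathcal{E}_0/((1-\theta)\underline{\omega}\,\underline{\alpha}\epsilon^2)\rceil$. The bound then gives some $k\le N$ with $\|\mathcal{G}_{\overline{\alpha}}(x_{k-1})\|^2\le\epsilon^2$, and $\hat x=x_{k-1}$ is the required point. The main obstacle is the stepsize-monotonicity fact for the gradient mapping together with the $k=0$ convention for $\mathcal{E}_0$; everything else is telescoping.
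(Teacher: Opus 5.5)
Your proposal is correct and follows the paper's proof step for step. The shared chain is: telescoping \eqref{eq:lyapunov_descent}, bounding $\omega_k\alpha_{k-1}\ge\underline{\omega}\,\underline{\alpha}$, deducing summability of $\|\mathcal{G}_{k-1}\|^2$, and transferring to $\mathcal{G}_{\overline{\alpha}}$ via the monotonicity of $\eta\mapsto\|\mathcal{G}_\eta(x)\|$, which the paper asserts without proof and you cite from Beck. The $\mathcal{E}_0$ bookkeeping you flag is indeed left implicit in the paper; taking $\alpha_{-1}:=\alpha_0$ (consistent with $\rho_{-1}=0$) makes the $k=1$ case of \cref{lem:lyapunov_descent} go through, with $\mathcal{E}_0=E_0=F(x_0)-F_*+\frac{1}{(2+\tau)\alpha_0}\|x_1-x_0\|^2$.
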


\begin{proof}
    For any integer $N \ge 1$, summing \eqref{eq:lyapunov_descent} from $k=1$ to $N$ and using $\mathcal{E}_N \ge 0$, we obtain
    \begin{equation*}
        \frac{1-\theta}{2+\tau} \sum_{k=1}^{N} \omega_k \alpha_{k-1} \|\mathcal{G}_{k-1}\|^2 \le \mathcal{E}_0 - \mathcal{E}_N \le \mathcal{E}_0.
    \end{equation*}
    This along with \Cref{prop:stepsize_bound} and \eqref{omega_bound} leads to
    \begin{equation}\label{eq:sum_grad_mapping}
        \frac{(1-\theta)\underline{\omega} \, \underline{\alpha}}{2+\tau} \sum_{k=1}^{N} \|\mathcal{G}_{k-1}\|^2 \le \mathcal{E}_0.
    \end{equation}
    Note that $\underline{\omega} > 0$, $\underline{\alpha} > 0$, $\tau \ge 0$, and $\theta \in [0,1)$. Taking $N \to \infty$ in \eqref{eq:sum_grad_mapping} yields $\sum_{k=1}^{\infty} \|\mathcal{G}_{k-1}\|^2 < \infty$, which implies $\lim\limits_{k \to \infty} \|\mathcal{G}_{k-1}\| = 0$.
By \cref{def:gradient_mapping}, the norm $\|\mathcal{G}_{\eta}(x)\|$ is nonincreasing with respect to $\eta > 0$. Since $\alpha_{k-1} \le \overline{\alpha}$ holds for all $k \ge 1$ (see \cref{prop:stepsize_bound}), we have
    \begin{equation}\label{eq:grad_mapping_mono}
        0 \le \|\mathcal{G}_{\overline{\alpha}}(x_{k-1})\| \le \|\mathcal{G}_{\alpha_{k-1}}(x_{k-1})\| = \|\mathcal{G}_{k-1}\|.
    \end{equation}
    Combining this with $\lim\limits_{k \to \infty} \|\mathcal{G}_{k-1}\| = 0$, it follows immediately that $\lim\limits_{k \to \infty} \|\mathcal{G}_{\overline{\alpha}}(x_{k-1})\| = 0$.
    
    Furthermore, using \eqref{eq:sum_grad_mapping} and \eqref{eq:grad_mapping_mono}, we deduce
    \begin{equation*}
        N \min_{1 \le k \le N} \|\mathcal{G}_{\overline{\alpha}}(x_{k-1})\|^2 \le N \min_{1 \le k \le N} \|\mathcal{G}_{k-1}\|^2 \le \sum_{k=1}^{N} \|\mathcal{G}_{k-1}\|^2 \le \frac{(2+\tau)\mathcal{E}_0}{(1-\theta)\underline{\omega} \, \underline{\alpha}},
    \end{equation*}
    which leads to the desired result as $N>0$. 
\end{proof}

\begin{remark} \label{remark3}

Although we argued in \Cref{rem-reasonable} that the convergence condition \eqref{eq:tau_upper_bound} is reasonable, it remains rather conservative. Indeed, in our implementation (see \Cref{sec:experiments} below), we set $\alpha_0=0.01$ and $\rho_0=10^{10}$. It follows from \cref{prop:stepsize_bound} that $\bar{\alpha}=0.01\exp(\frac{10^{10}}{2})\approx10^{2\times 10^9}$. Combining this with \eqref{eq:tau_upper_bound} and the fact that $\theta\in[0,1)$ implies that the admissible range of $\tau$ becomes extremely small, effectively forcing $\tau$ to be close to $0$. Nevertheless, the results in \Cref{sec:ablation} show that AdaBBNC remains effective even for relatively large values of $\tau$ (e.g., $\tau=100$). Therefore, narrowing the gap between this conservative theoretical requirement and the observed empirical behavior represents an interesting direction for future research.


\end{remark}

\begin{remark} \label{remark4}

However, an excessively large value of $\tau$ may cause $H_k(\tau,\theta)$ to become nonpositive, i.e., $H_k(\tau,\theta)\le 0$. In this case, the stepsize in \cref{alg:adabbnc} reduces to $\alpha_k = \min\left\{\sqrt{1+\rho_{k-1}}\alpha_{k-1}, 1/L_k\right\}$, thereby failing to exploit the BB stepsize. To clarify this effect, we rewrite $H_k$ in \eqref{eq:generalized_bb_curvature} as follows:
\begin{equation*}
    H_k(\tau, \theta) = \tau \underbrace{\left( \frac{\ell_k}{2\alpha_{k-1}} + \frac{(\alpha_k^{BBL})^{-1}}{\alpha_{k-1}} - \frac{1}{\alpha_{k-1}^2} \right)}_{:= \Delta_k} + \left( L_k^2 + \frac{\ell_k}{\alpha_{k-1}} - \frac{\theta}{\alpha_{k-1}^2} \right).
\end{equation*}
Clearly, if $\Delta_k < 0$, then a sufficiently large $\tau$ will lead to $H_k(\tau,\theta) \le 0$.  This situation can indeed occur in practice. For example, consider $f(x) = -x^2$ and $g(x) \equiv 0$. For any $x_k \neq x_{k-1}$, it follows from \eqref{L_k} and \eqref{eq:long_bb} that
\begin{equation*}
   L_k = \frac{|-2(x_k - x_{k-1})|}{|x_k - x_{k-1}|} = 2, \quad (\alpha_k^{BBL})^{-1} = \frac{\langle -2(x_k - x_{k-1}), x_k - x_{k-1} \rangle}{|x_k - x_{k-1}|^2} = -2. 
\end{equation*}
Combining the definition of $\ell_k$ in \eqref{l_k} leads to
\begin{equation*}
    \ell_k = \frac{2(-x_k^2 - (-x_{k-1}^2) - (-2x_k)(x_k - x_{k-1}))}{|x_k - x_{k-1}|^2} = \frac{2(x_k - x_{k-1})^2}{|x_k - x_{k-1}|^2} = 2.
\end{equation*}
Then we have
\begin{equation*}
    \Delta_k = \frac{2}{2\alpha_{k-1}} + \frac{-2}{\alpha_{k-1}} - \frac{1}{\alpha_{k-1}^2} = -\frac{1}{\alpha_{k-1}} - \frac{1}{\alpha_{k-1}^2}.
\end{equation*}
Since $\alpha_{k-1} > 0$, it gives $\Delta_k < 0$, which in turn implies that $\lim\limits_{\tau \to +\infty} H_k(\tau,\theta) = -\infty$. Consequently, AdaBBNC fully discards the BB stepsize, which is consistent with the observations reported in \Cref{sec:ablation}.

\end{remark}

\section{Numerical Experiments}
\label{sec:experiments}

In this section, we conduct a series of numerical experiments to evaluate the performance of the proposed AdaBBNC. All methods are implemented in Python and run on a MacBook Air equipped with an Apple M4 chip and 16GB of RAM. 

To provide a comprehensive assessment, our evaluation is divided into two main parts: a primary performance evaluation across various nonconvex optimization tasks (\cref{sec:main_experiments}) and a sensitivity analysis of the parameters $\tau$ and $\theta$ (\cref{sec:ablation}).

The methods compared in our experiments are summarized in \Cref{tab:parameter_settings}. For GD-LS, we set $\beta=1/2$. For adaptive proximal methods, the choice of the stepsize growth regulator $\rho_k$ plays a crucial role. We adopt the following specific sequence in our numerical experiments:
\begin{equation}\label{eq:rho_selection}
    \rho_k = \frac{100(\ln(k+1))^4}{(k+1)^{1.1}}, \quad \forall~k \ge 1,
\end{equation}
with the initial value set to $\rho_0 = 10^{10}$. This particular formulation was originally proposed by Hoai et al. \cite{hoaiaCompositeOptimizationModels2010, hoaiNovelStepsizeGradient2024} to control stepsize expansion and was subsequently adopted as a primary setting in the AdaPGNC framework \cite{yagishitaSimpleLinesearchfreeFirstorder2025a}. To ensure a rigorous and fair evaluation, we empirically identify \eqref{eq:rho_selection} as the most robust configuration of $\rho_k$ for AdaPGNC across the evaluated tasks. We therefore applied the same sequence for AdaBBNC. In addition, the parameters in AdaBBNC are fixed to $\tau = 1$ and $\theta = 0.1$ across all main experiments, unless otherwise specified in the subsequent parameter sensitivity analysis. To accommodate the distinct characteristics of each optimization task, detailed parameter settings, including the initial stepsize and stopping criteria, are provided in the respective subsections.

\begin{table}[htbp]
    \centering
    \caption{Comparison of the stepsize updating rules among the evaluated methods.}
    \label{tab:parameter_settings}
    \renewcommand{\arraystretch}{1.8} 
    
    \begin{threeparttable}
        \begin{tabular}{lp{11.5cm}}
            \toprule
            \textbf{Method} & \textbf{Parameters and Stepsize Updating Rule} \\
            \midrule
            
            GD-LS & 
            $\alpha_k = \alpha_0 \beta^{m_k}$, where $m_k \ge 0$ is the smallest integer satisfying the Armijo linesearch condition. \\
            
            \midrule
            
            AC-PG / AC-PGM\tnote{*} & 
            $\alpha_k = \frac{1}{\alpha \gamma_k}$, where $\gamma_k = \max\{L_0, \dots, L_{k-1}\}$. This ensures $\alpha_k$ is monotonically nonincreasing. \\
            
            \midrule
            
            AdaPGNC & 
            $\alpha_k = \begin{cases} 
            \min\left\{\sqrt{1+\rho_{k-1}}\alpha_{k-1}, \frac{1}{L_k}\right\}, & \text{if } \ell_k \le 0; \\ 
            \min\left\{\sqrt{1+\rho_{k-1}}\alpha_{k-1}, \frac{1}{\sqrt{2}L_k}, \sqrt{\frac{\alpha_{k-1}}{2\ell_k}}\right\}, & \text{otherwise}. 
            \end{cases}$ \\
            
            \midrule
            
            \textbf{AdaBBNC} & 
            $\alpha_k = \begin{cases} 
            \min\left\{\sqrt{1+\rho_{k-1}}\alpha_{k-1}, \frac{1}{L_k}\right\}, & \text{if } \ell_k \le 0 \text{ or } H_k(\tau,\theta) \le 0; \\ 
            \min\left\{\sqrt{1+\rho_{k-1}}\alpha_{k-1}, \frac{1}{\sqrt{H_k(\tau,\theta)}}\right\}, & \text{otherwise}. 
            \end{cases}$ 
             \\
            \bottomrule
        \end{tabular}
        
        \begin{tablenotes}
            \footnotesize
            \item[*] \textbf{Note:} AC-PG and AC-PGM share the same stepsize framework based on the historical maximum curvature $\gamma_k$. Their differences lie in the choice of parameters and the problem settings: AC-PG fixes $\alpha = 1$ and is designed for constrained optimization, whereas AC-PGM requires $\alpha > 1$ and is developed for composite optimization problems.
        \end{tablenotes}
    \end{threeparttable}
\end{table}

\subsection{Performance Evaluation}
\label{sec:main_experiments}

This subsection is dedicated to assessing the convergence speed, computational efficiency, and stepsize dynamics of AdaBBNC against established baselines. To systematically examine its behavior, we conduct experiments across four progressively challenging tasks.

We begin by visualizing the method’s adaptivity in navigating pathological valley landscapes using the classic  Rosenbrock function. We then assess its robustness to ill-conditioning by following benchmark settings from recent studies \cite{yagishitaSimpleLinesearchfreeFirstorder2025a, lanProjectedGradientMethods2024a} to construct highly ill-conditioned synthetic instances for symmetric matrix factorization and box-constrained quadratic programming. Finally, we extend the evaluation to real-world scenarios by considering the low-rank matrix completion problem on the widely-used MovieLens datasets..

\subsubsection{Rosenbrock Function}

Before scaling up to high-dimensional machine learning tasks, we first illustrate the adaptivity of AdaBBNC using the classical   Rosenbrock function \cite{rosenbrock1960automatic}, defined as follows:
\begin{equation}
    f(x) = (1 - x_1)^2 + 100(x_2 - x_1^2)^2,
\end{equation}
which features a narrow, highly ill-conditioned valley with the global minimum at $x^* = [1, 1]^\top$, serving as an ideal stress test for stepsize stability.

For both AdaBBNC and AdaPGNC, the initial stepsize is set to $\alpha_0 = 0.002$. All methods are terminated when $\|x_k - x^*\|_2 < 10^{-6}$.

\begin{figure}[htbp]
    \centering
    \includegraphics[width=1.0\textwidth]{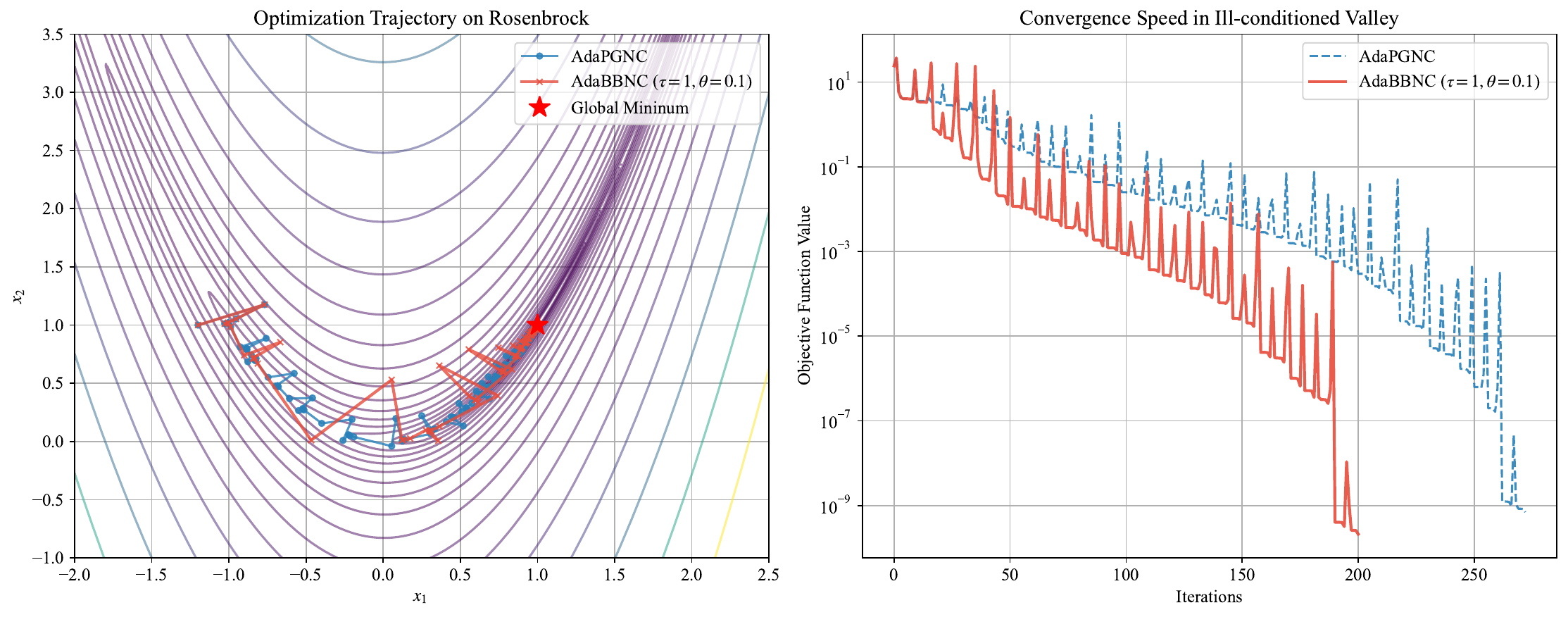} 
    \caption{Optimization dynamics on the  Rosenbrock function. The left panel visualizes the optimization trajectories navigating the highly ill-conditioned narrow valley. The right panel compares the convergence speed, depicting the nonmonotone decrease of the objective value over iterations.}
    \label{fig:rosenbrock}
\end{figure}

As shown in \Cref{fig:rosenbrock}, the ill-conditioned narrow valley of the  Rosenbrock function forces AdaPGNC to take conservative steps. In contrast, the proposed curvature approximation $H_k(\tau, \theta)$ allows AdaBBNC to safely execute much larger steps. Consequently, AdaBBNC achieves a much faster nonmonotone objective descent, satisfying the high-precision stopping criterion in approximately $210$ iterations, compared to roughly $270$ iterations for AdaPGNC. This confirms that the proposed curvature estimation facilitates more effective stepsize selection and significantly improves convergence speed.

\subsubsection{Ill-Conditioned Matrix Factorization}
\label{sec:results_matrix}

We then consider the symmetric matrix factorization problem \cite{chi2019nonconvex}: 
\begin{equation} \label{eq:sym_mf}
    \min_{X \in \mathbb{R}^{n \times r}}~\frac{1}{4} \|XX^\top - M^*\|_F^2,
\end{equation}
where $X \in \mathbb{R}^{n \times r}$ is the low-rank factor matrix, $r$ is the target rank, and $M^* \in \mathbb{R}^{n \times n}$ is the target symmetric positive semi-definite matrix. To introduce severe ill-conditioning, we construct $M^*$ with a condition number of 1000, setting its eigenvalues to decay from $10^1$ to $10^{-2}$. Furthermore, the initial point $X_0$ is drawn from a standard Gaussian distribution scaled by $10^{-6}$, placing it extremely close to the origin.

The initial stepsizes for AdaBBNC, AdaPGNC, and GD-LS are uniformly set to $\alpha_0 = 10^{-2}$, while AC-PGM employs an adaptation factor of $\alpha=1.1$ with an initial Lipschitz estimate $L_0=100$. The optimization terminates when the gradient norm falls below $10^{-6}$ or the iteration budget of $3000$ is reached. All runtimes are measured in CPU process time.

As illustrated in \Cref{fig:results_matrix}, AdaBBNC effectively handles severe ill-conditioning, achieving  faster convergence than other methods. To further assess computational efficiency and scalability, we extend the experiments to larger dimensions ranging from $n = 2000$ to $4000$. The quantitative results, summarized in \Cref{tab:matrix_results}, confirm that our proposed method consistently maintains superior performance across other approaches.

\begin{figure}[htbp]
    \centering
    \includegraphics[width=0.75\textwidth]{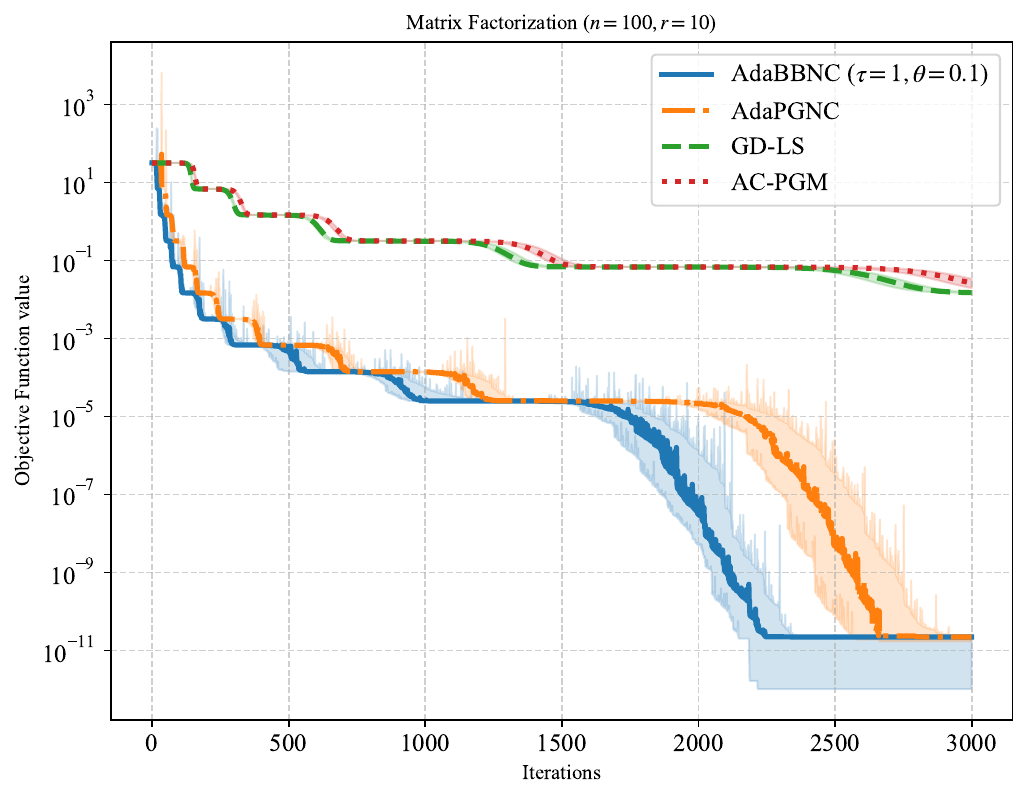}
    \caption{Convergence behavior for the ill-conditioned symmetric matrix factorization problem.}
    \label{fig:results_matrix}
\end{figure}

\begin{table}[htbp]
    \centering
    \caption{Comparison on the ill-conditioned symmetric matrix factorization problem. }
    \label{tab:matrix_results}
    \begin{tabular}{clcccc}
        \toprule
        \textbf{\makecell{Dimensions
        \\ $(n, r)$}} & \textbf{Method} & \textbf{Iterations} & \textbf{GradRes} & \textbf{Loss} & \textbf{Time (s)} \\
        \midrule

        $(2000, 20)$  & \textbf{AdaBBNC}  & \textbf{2789.0} & $\mathbf{4.82 \times 10^{-6}}$ & $\mathbf{5.38 \times 10^{-10}}$ & \textbf{155.58} \\
                      & AdaPGNC                   & 2888.7          & $1.48 \times 10^{-4}$          & $2.53 \times 10^{-7}$           & 160.68 \\
                      & AC-PGM                    & 3000.0          & $2.78 \times 10^{-2}$          & $6.85 \times 10^{-2}$           & 166.88 \\
                      & GD-LS                     & 3000.0          & $6.74 \times 10^{-2}$          & $6.25 \times 10^{-2}$           & 249.16 \\
        \midrule
        $(3000, 20)$  & \textbf{AdaBBNC}  & \textbf{2843.0} & $\mathbf{3.17 \times 10^{-4}}$ & $\mathbf{4.05 \times 10^{-6}}$  & \textbf{385.36} \\
                      & AdaPGNC                   & 2877.0          & $6.46 \times 10^{-4}$          & $4.10 \times 10^{-6}$           & 419.02 \\
                      & AC-PGM                    & 3000.0          & $2.99 \times 10^{-2}$          & $6.84 \times 10^{-2}$           & 451.34 \\
                      & GD-LS                     & 3000.0          & $7.10 \times 10^{-2}$          & $6.15 \times 10^{-2}$           & 658.28 \\
        \midrule
        $(3000, 30)$  & \textbf{AdaBBNC}  & \textbf{2751.0} & $\mathbf{1.91 \times 10^{-6}}$ & $\mathbf{8.36 \times 10^{-11}}$ & \textbf{379.21} \\
                      & AdaPGNC                   & 2922.7          & $1.39 \times 10^{-5}$          & $1.20 \times 10^{-8}$           & 402.70 \\
                      & AC-PGM                    & 3000.0          & $9.38 \times 10^{-2}$          & $9.03 \times 10^{-2}$           & 417.60 \\
                      & GD-LS                     & 3000.0          & $5.59 \times 10^{-2}$          & $7.91 \times 10^{-2}$           & 600.15 \\
        \midrule
       
        $(4000, 30)$  & \textbf{AdaBBNC}  & \textbf{2712.0} & $\mathbf{9.44 \times 10^{-7}}$ & $\mathbf{2.06 \times 10^{-11}}$ & \textbf{608.19} \\
                      & AdaPGNC                   & 2821.3          & $3.12 \times 10^{-6}$          & $1.67 \times 10^{-10}$          & 632.16 \\
                      & AC-PGM                    & 3000.0          & $9.27 \times 10^{-2}$          & $9.00 \times 10^{-2}$           & 671.67 \\
                      & GD-LS                     & 3000.0          & $5.81 \times 10^{-2}$          & $7.87 \times 10^{-2}$           & 1000.45 \\
        \bottomrule
    \end{tabular}
\end{table}

\subsubsection{Box-Constrained QP}
\label{sec:box_qp}

The box-constrained quadratic programming (Box-QP) problem \cite{birgin2000nonmonotone} is defined as follows:
\begin{equation}
    \begin{aligned}
        \min_{x \in \mathbb{R}^n} \quad & \frac{1}{2} x^\top Q x + c^\top x \\
        \text{s.t.} \quad & x_i \in [a_i, b_i], \quad \forall i = 1, \dots, n,
    \end{aligned}
\end{equation}
where $Q \in \mathbb{R}^{n \times n}$ is an ill-conditioned symmetric matrix containing negative eigenvalues, and the box constraint is given by $[a_i, b_i] = [-2, 2]$ for all $i = 1, \dots, n$. 

 To rigorously assess the adaptability of the methods, the Hessian matrix $Q$ is generated with a maximum eigenvalue of $L=10.0$ and $10\%$ negative eigenvalues set to $-0.1$. The condition number $\kappa$ is controlled by allowing the remaining positive eigenvalues to decay logarithmically from $1.0$ down to $10.0 / \kappa$. Furthermore, to ensure a heavily constrained optimal solution, the linear term is set as $c = -Q x_{\text{unc}}$, where the unconstrained stationary point is sampled as $x_{\text{unc}} \sim \mathcal{N}(0, 4I)$. The initial point $x_0$ is uniformly sampled from $[-1, 1]^n$.

In our numerical experiments, the initial stepsize for both AdaBBNC and AdaPGNC is uniformly set to $\alpha_0 = 10^{-2}$. To ensure a comprehensive evaluation of the auto-conditioned rule, AC-PG is tested with different initial Lipschitz estimates $L_0 \in \{0.1L, 0.5L\}$ (which implicitly determines its initial stepsize), as recommended in \cite{lanProjectedGradientMethods2024a}. All competing methods are executed under a fixed budget of $3000$ iterations.

We first analyze a highly ill-conditioned setting with condition number $\kappa = 10^4$, as illustrated in \Cref{fig:box_qp_dynamics}. The empirical results indicate that both AdaBBNC and AdaPGNC exhibit better performance compared to AC-PG, which clearly demonstrates the advantage of the nonmonotonic stepsize strategy in navigating severe ill-conditioning. Subsequently, we evaluate the methods across a broader spectrum of ill-conditioning by varying $\kappa \in \{10^2, 10^3, 10^4, 10^5\}$. As summarized in \Cref{tab:sensitivity_qp}, our proposed method consistently outperforms the other compared approaches.

\begin{figure}[htbp]
    \centering
    \includegraphics[width=1.0\textwidth]{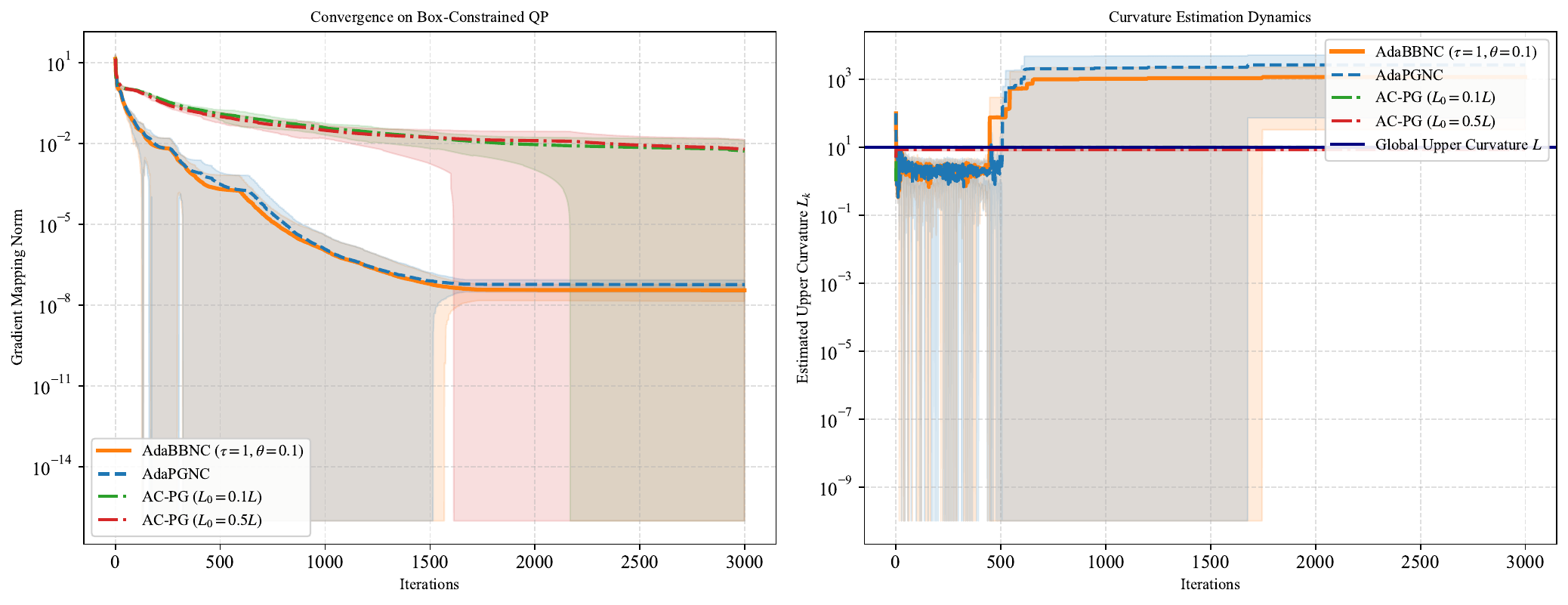}
    \caption{Convergence behavior and curvature estimation dynamics on the Box-QP problem with $\kappa=10^4$. The left panel tracks the projected gradient mapping norm $\|\mathcal{G}_k\|$ over iterations. The right panel depicts the corresponding estimated local upper curvature $L_k$ utilized by each approach.}
    \label{fig:box_qp_dynamics}
\end{figure}

\begin{table}[htbp]
    \centering
    \caption{Final norm of the gradient mapping $\|\mathcal{G}_k\|$ under different values of $\kappa$ after $3000$ iterations.}
    \label{tab:sensitivity_qp}
    \renewcommand{\arraystretch}{1.2}
    \begin{tabular}{l c c c c}
        \toprule
        \textbf{Method} & $\mathbf{\kappa = 10^2}$ & $\mathbf{\kappa = 10^3}$ & $\mathbf{\kappa = 10^4}$ & $\mathbf{\kappa = 10^5}$ \\
        \midrule
        AdaPGNC & $9.17 \times 10^{-8}$ & $7.45 \times 10^{-8}$ & $5.85 \times 10^{-8}$ & $4.40 \times 10^{-8}$ \\
        \textbf{AdaBBNC} & $\mathbf{5.30 \times 10^{-8}}$ & $\mathbf{3.67 \times 10^{-8}}$ & $\mathbf{3.62 \times 10^{-8}}$ & $\mathbf{2.48 \times 10^{-8}}$ \\
        AC-PG ($L_0=0.1L$) & $3.24 \times 10^{-3}$ & $2.65 \times 10^{-3}$ & $5.39 \times 10^{-3}$ & $4.76 \times 10^{-3}$ \\
        AC-PG ($L_0=0.5L$) & $2.90 \times 10^{-3}$ & $2.73 \times 10^{-3}$ & $6.52 \times 10^{-3}$ & $2.48 \times 10^{-3}$ \\
        
        \bottomrule
    \end{tabular}
\end{table}

\subsubsection{Low-Rank Matrix Completion}
\label{sec:matrix_completion}

Finally, we evaluate the methods on the low-rank matrix completion problem \cite{wenSurveyNonconvexRegularizationbased2018}. Given a set of observed entries $\Omega$ of size $N$, where each element $(i, j, s) \in \Omega$ indicates that the $(i, j)$-th entry of the target matrix has a value of $s$, the goal is to recover a rank-$r$ approximation by factorizing the target matrix into two low-rank matrices $U \in \mathbb{R}^{p \times r}$ and $V \in \mathbb{R}^{q \times r}$. To enhance numerical stability and training efficiency, we formulate the optimization problem by fitting the rating residuals:
\begin{equation}
    \min_{Z := (U,V) \in \mathbb{R}^{p \times r} \times \mathbb{R}^{q \times r}} ~ \frac{1}{2N} \sum_{(i,j,s) \in \Omega} \left( (UV^\top)_{ij} - (s - \mu) \right)^2 + \frac{1}{2N} \|U^\top U - V^\top V\|_F^2,
\end{equation}
where $\mu = \frac{1}{N} \sum_{(i,j,s) \in \Omega} s$ represents the global mean of all observed ratings. The regularization term $\|U^\top U - V^\top V\|_F^2$ is introduced to ensure scaling consistency between the factor matrices $U$ and $V$.

\begin{table}[htbp]
\centering
\caption{Key statistics and configurations of the MovieLens benchmark datasets.}
\label{tab:dataset_stats}
\begin{tabular}{l|c|c}
\toprule
\textbf{Dataset Characteristics} & \textbf{MovieLens-100K} & \textbf{MovieLens-1M} \\
\midrule
Matrix Rank ($r$)                & $100$                   & $500$                 \\
Dimension ($p$)                  & $943$                   & $6,040$               \\
Dimension ($q$)                  & $1,682$                 & $3,900$               \\
Total Ratings ($N$)              & $100,000$               & $1,000,209$           \\
Optimization Variables ($(p+q)r$) & $262,500$               & $4,970,000$           \\
\bottomrule
\end{tabular}
\end{table}

To assess the performance of AdaBBNC on real-world applications, we conducted experiments using the MovieLens-100K and MovieLens-1M collaborative filtering datasets \cite{harper2015movielens}. The detailed dataset characteristics and experimental configurations are reported in \Cref{tab:dataset_stats}.

For the experimental setup, the target rank is set to $r=100$ for MovieLens-100K and $r=500$ for the larger-scale MovieLens-1M dataset. The factor matrices $U$ and $V$ are initialized by sampling from the Gaussian distribution $\mathcal{N}(0,1/r)$. To ensure a fair comparison, all tested methods are initialized with $\alpha_0=10^{-2}$ and terminated according to a unified stopping criterion, with the maximum CPU time capped at $2000$ seconds.

\Cref{fig:completion} presents the numerical results. The two adaptive methods (AdaBBNC and AdaPGNC) converge consistently on both datasets, whereas GD-LS and AC-PGM exhibit stagnation behavior. In particular, AdaBBNC achieves the best overall performance across both the MovieLens-100K and MovieLens-1M datasets.

\begin{figure}[htbp]
    \centering
    \includegraphics[width=0.95\textwidth]{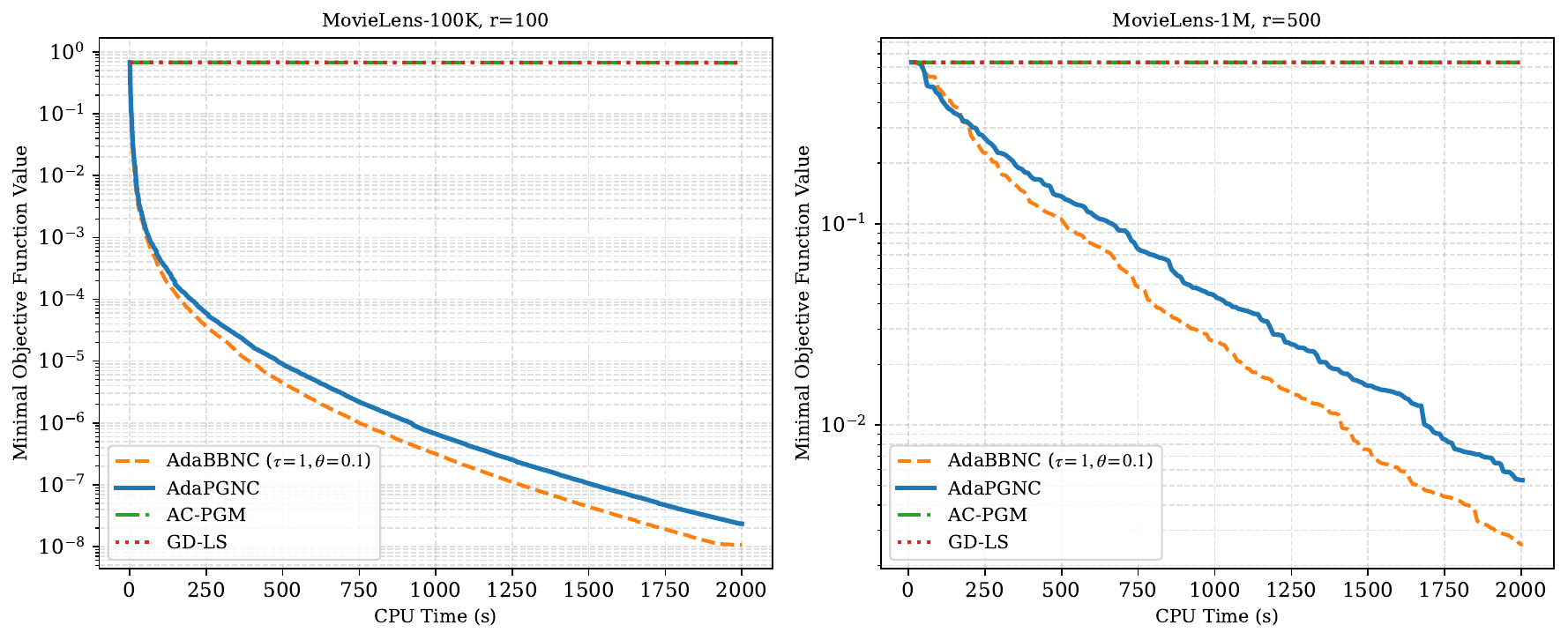}
    \caption{Convergence of the minimal objective function value with respect to CPU time on the low-rank matrix completion problem. The left and right panels present the optimization results for the MovieLens-100K ($r=100$) and MovieLens-1M ($r=500$) datasets, respectively.}
    \label{fig:completion}
\end{figure}


\subsection{ Sensitivity Analysis on Parameters \texorpdfstring{$\tau$}{tau} and \texorpdfstring{$\theta$}{theta}}
\label{sec:ablation}

In this subsection, we present a parameter sensitivity analysis to examine the effects of $\tau$ and $\theta$ on the performance of AdaBBNC. These parameters are introduced to regulate the curvature estimate $H_k(\tau,\theta)$ defined in \eqref{eq:generalized_bb_curvature}.

To rigorously evaluate the sensitivity of AdaBBNC to the parameters $\tau$ and $\theta$, we consider the ill-conditioned symmetric matrix factorization problem introduced in \cref{sec:results_matrix}, with dimensions set to $n=4000$ and $r=30$. All methods are run until the gradient norm falls below $10^{-6}$ or a maximum of $3000$ iterations is reached. The number of iterations required for convergence and the final objective value are averaged over three independent random seeds. For comparison, we include AdaPGNC and a degenerate variant of AdaBBNC with $\tau=0$ and $\theta=0$ as baseline methods in \Cref{fig:theta_sensitivity,fig:tau_sensitivity}. 

\begin{figure}[htbp]
    \centering
    \includegraphics[width=1.0\textwidth]{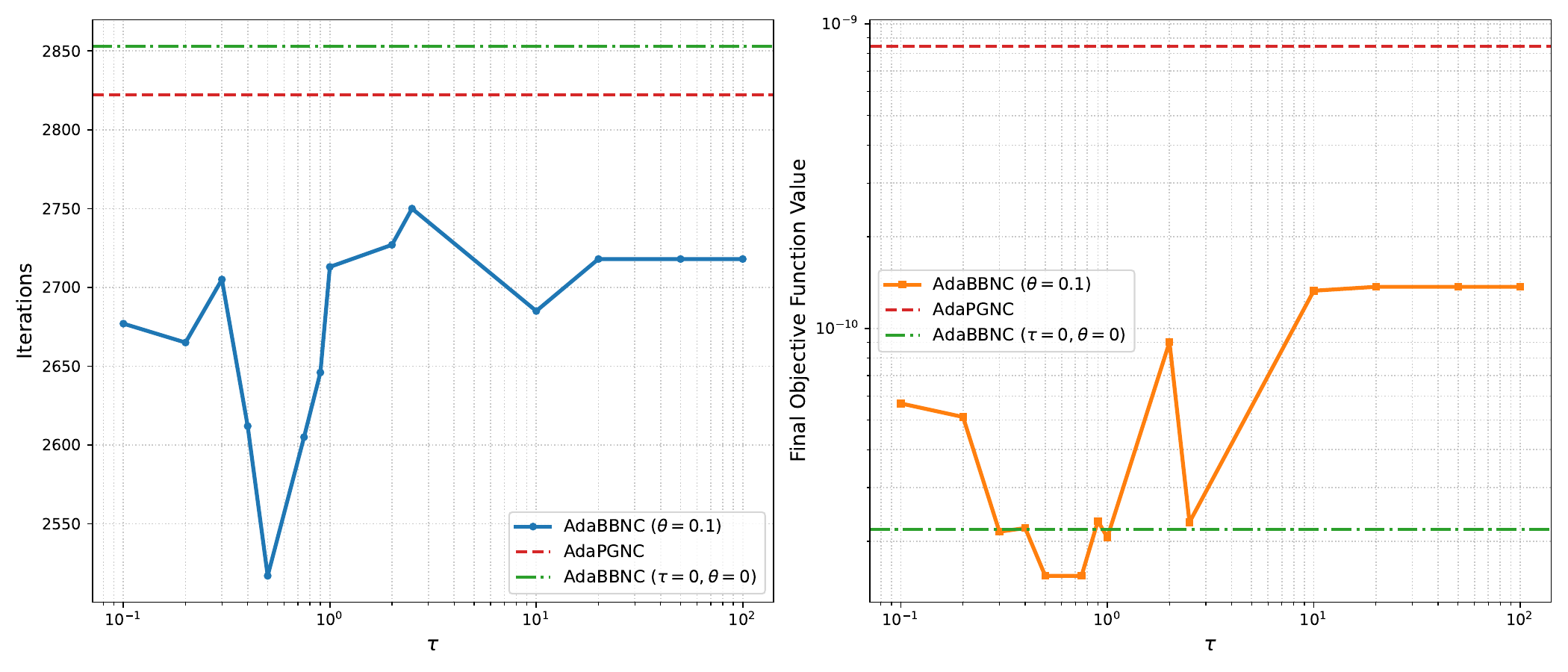}
    \caption{Sensitivity analysis of AdaBBNC with respect to the parameter $\tau$ on the ill-conditioned matrix factorization problem ($n=4000$, $r=30$). The left panel shows the number of iterations required for convergence, and the right panel presents the final objective value.}
    \label{fig:tau_sensitivity}
\end{figure}

First, we investigate the influence of the parameter $\tau$. To this end, we fix $\theta = 0.1$ and evaluate $\tau$ over a broad range from $0.1$ to $100$. The corresponding optimization results are presented in \cref{fig:tau_sensitivity}. As shown, all tested values of $\tau$ require fewer iterations to converge than both baseline methods: AdaPGNC, which requires 2822 iterations and attains a final loss of $8.45 \times 10^{-10}$, and the degenerate AdaBBNC ($\tau=0$, $\theta=0$), which requires 2853 iterations with a final loss of $2.19 \times 10^{-11}$.

Taking both convergence efficiency and the final objective value into account, AdaBBNC empirically achieves its best performance when $\tau$ lies in the range $[0.1,1.0]$. For example, setting $\tau = 0.5$ yields the fastest convergence, requiring only 2517 iterations while attaining a highly accurate objective value of $1.54 \times 10^{-11}$.

Furthermore, we observe that when $\tau$ becomes excessively large ($\tau \geq 20$), the optimization behavior stabilizes at nearly identical outcomes, as illustrated in \cref{fig:tau_sensitivity}. Specifically, for $\tau \in \left\{20,50,100\right\}$, the method consistently terminates after exactly 2718 iterations and achieves the same final loss of $1.37 \times 10^{-10}$. By examining the internal optimization dynamics, we find that for these large values of $\tau$, the curvature estimator $H_k$ remains nonpositive throughout the entire optimization process, resulting in a $100\%$ rejection rate of the BB stepsize. This empirical observation is fully consistent with the theoretical analysis in \cref{remark4}.

\begin{figure}[htbp]
    \centering
    \includegraphics[width=1.0\textwidth]{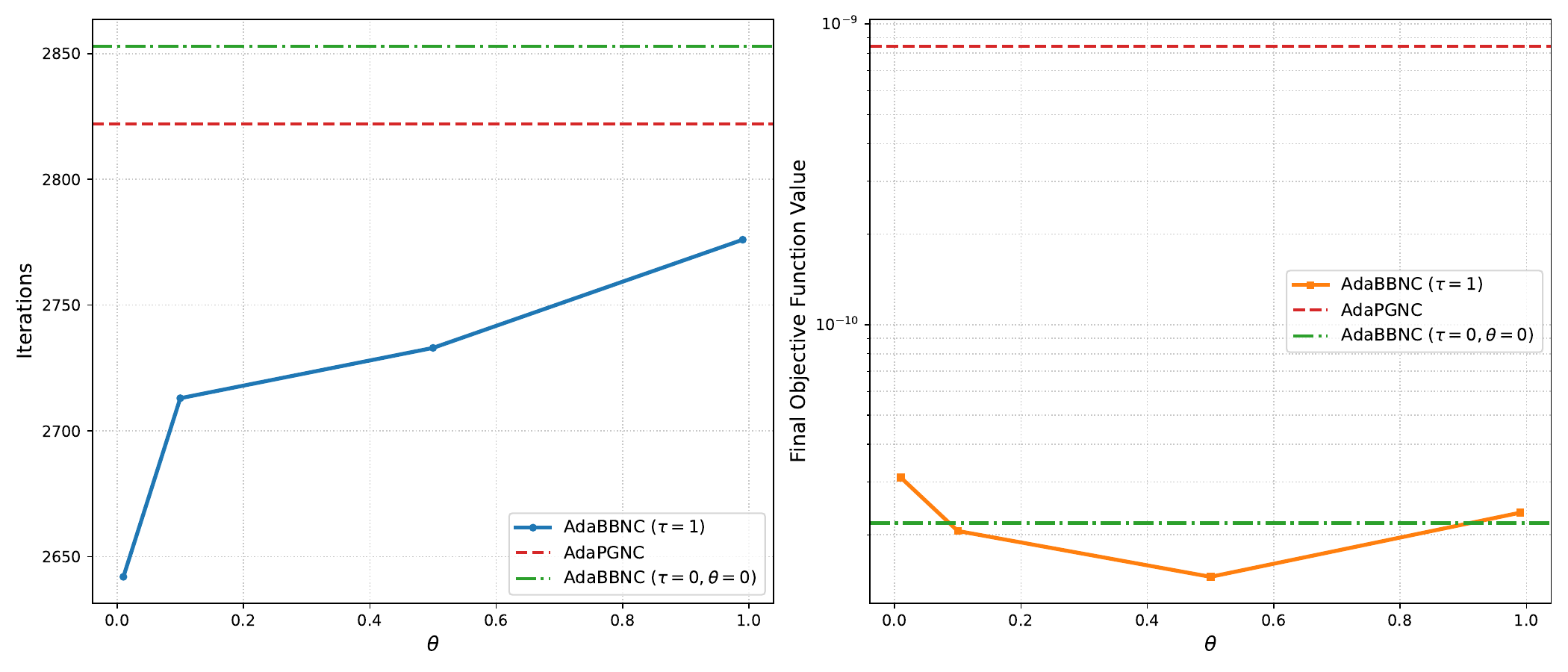}
    \caption{Sensitivity analysis of AdaBBNC with respect to the parameter $\theta$ on the ill-conditioned matrix factorization problem ($n=4000$, $r=30$). The left panel shows the number of iterations required for convergence, and the right panel presents the final objective value.}
    \label{fig:theta_sensitivity}
\end{figure}

Next, we fix $\tau = 1$ and consider $\theta \in \left\{0.01, 0.1, 0.5, 0.99\right\}$. As shown in \cref{fig:theta_sensitivity}, all tested values of $\theta$ require fewer iterations than both baseline methods. Specifically, AdaPGNC requires 2822 iterations and attains a final loss of $8.45 \times 10^{-10}$, and the degenerate AdaBBNC ($\tau=0$, $\theta=0$) requires 2853 iterations.

Furthermore, we observe that a very small value of $\theta$, namely $\theta = 0.01$, yields the fastest convergence, requiring only 2642 iterations, but leads to a relatively less accurate final objective value of $3.10 \times 10^{-11}$. In contrast, as $\theta$ approaches $1$ (e.g., $\theta = 0.99$), the method becomes more conservative, requiring 2776 iterations without providing additional gains in accuracy, plateauing at a final loss of $2.37 \times 10^{-11}$. Therefore, considering both convergence efficiency and the final objective value, choosing $\theta$ excessively close to either $0$ or $1$ is generally not recommended. Instead, moderate values of $\theta$ provide a better balance between convergence speed and numerical accuracy.


\section{Conclusion}\label{sec:con}

In this paper, we proposed the adaptive Barzilai-Borwein proximal gradient method (\Cref{alg:adabbnc}) for nonconvex optimization for solving nonconvex optimization problem \eqref{eq:composite_problem}. To alleviate the conservative stepsize behavior commonly observed in existing linesearch-free methods under locally nonconvex geometries, we introduced a flexible parameterized curvature estimate $H_k(\tau, \theta)$. On this basis, the BB stepsize was successfully incorporated into the proximal gradient framework. We further proved that AdaBBNC attains the optimal $\mathcal{O}(\epsilon^{-2})$ iteration complexity for computing an $\epsilon$-stationary point under reasonable assumptions.

Extensive numerical experiments on several representative nonconvex optimization tasks demonstrated the effectiveness of the proposed method. In particular, AdaBBNC exhibited strong robustness in ill-conditioned nonconvex settings and consistently achieved competitive convergence performance relative to existing approaches.

Future work will focus on extending the proposed adaptive BB-curvature framework to broader classes of large-scale optimization problems, particularly in modern machine learning applications, where adaptive curvature exploitation may further improve optimization efficiency and scalability.

\bibliographystyle{plainnat} 
\bibliography{nonconvex1}

@article{armijoMinimizationFunctionsHaving1966,
  title = {Minimization of Functions Having {{Lipschitz}} Continuous First Partial Derivatives},
  author = {Armijo, Larry},
  year = 1966,
  month = jan,
  journal = {Pacific Journal of Mathematics},
  volume = {16},
  number = {1},
  pages = {1--3},
  doi = {10.2140/pjm.1966.16.1},
  urldate = {2026-03-23},
  langid = {english}
}

@article{barzilaiTwopointStepSize1988a,
  title = {Two-Point Step Size Gradient Methods},
  author = {Barzilai, Jonathan and Borwein, Jonathan M.},
  journal = {IMA Journal of Numerical Analysis},
  volume = {8},
  number = {1},
  pages = {141--148},
  year = {1988},
  publisher = {Oxford University Press},
  doi = {10.1093/imajna/8.1.141}
}

@book{beckFirstOrderMethodsOptimization2017,
  title = {First-{{Order Methods}} in {{Optimization}}},
  author = {Beck, Amir},
  year = 2017,
  month = oct,
  publisher = {{Society for Industrial and Applied Mathematics}},
  address = {Philadelphia, PA},
  doi = {10.1137/1.9781611974997},
  urldate = {2026-03-25},
  langid = {english}
}

@article{beckSmoothingFirstOrder2012,
  title = {Smoothing and First Order Methods: {{A}} Unified Framework},
  author = {Beck, Amir and Teboulle, Marc},
  year = 2012,
  journal = {SIAM Journal on Optimization},
  volume = {22},
  number = {2},
  pages = {557--580},
  publisher = {SIAM}
}

@article{bellocruzConvergenceForwardBackward2016,
  title = {On the Convergence of the Forward--Backward Splitting Method with Linesearches},
  author = {Bello Cruz, Jos{\'e} Yunier and Nghia, Tran T.A.},
  year = 2016,
  month = nov,
  journal = {Optimization Methods and Software},
  volume = {31},
  number = {6},
  pages = {1209--1238},
  doi = {10.1080/10556788.2016.1214959},
  urldate = {2026-03-24},
  langid = {english},
}

@article{bertsekas1997nonlinear,
  title = {Nonlinear Programming},
  author = {Bertsekas, Dimitri P.},
  year = 1997,
  journal = {Journal of the Operational Research Society},
  volume = {48},
  number = {3},
  pages = {334--334},
  publisher = {Taylor \& Francis},
  doi = {10.1057/palgrave.jors.2600425}
}

@article{birgin2000nonmonotone,
  title = {Nonmonotone Spectral Projected Gradient Methods on Convex Sets},
  author = {Birgin, Ernesto G. and Mart{\'\i}nez, Jos{\'e} Mario and Raydan, Marcos},
  year = 2000,
  journal = {SIAM Journal on Optimization},
  volume = {10},
  number = {4},
  pages = {1196--1211},
  publisher = {SIAM}
}

@article{burdakovStabilizedBarzilaiborweinMethod2019,
  title = {Stabilized Barzilai-Borwein Method},
  author = {Burdakov, Oleg and Dai, Yu-Hong and Huang, Na},
  year = 2019,
  journal = {Journal of Computational Mathematics},
  pages = {916--936},
  publisher = {JSTOR}
}

@article{candes2008introduction,
  title = {An Introduction to Compressive Sampling},
  author = {Cand{\`e}s, Emmanuel J. and Wakin, Michael B.},
  journal = {IEEE Signal Processing Magazine},
  volume = {25},
  number = {2},
  pages = {21--30},
  year = {2008},
  publisher = {IEEE},
  doi = {10.1109/MSP.2007.914731}
}

@article{candesExactMatrixCompletion2012,
  title = {Exact Matrix Completion via Convex Optimization},
  author = {Cand{\`e}s, Emmanuel J. and Recht, Benjamin},
  year = 2012,
  journal = {Communications of the ACM},
  volume = {55},
  number = {6},
  pages = {111--119},
  publisher = {ACM New York, NY, USA}
}

@article{chi2019nonconvex,
  title = {Nonconvex Optimization Meets Low-Rank Matrix Factorization: {{An}} Overview},
  author = {Chi, Yuejie and Lu, Yue M. and Chen, Yuxin},
  year = 2019,
  journal = {IEEE Transactions on Signal Processing},
  volume = {67},
  number = {20},
  pages = {5239--5269},
  publisher = {IEEE}
}

@incollection{combettesProximalSplittingMethods2011,
  title = {Proximal Splitting Methods in Signal Processing},
  booktitle = {Fixed-Point Algorithms for Inverse Problems in Science and Engineering},
  author = {Combettes, Patrick L. and Pesquet, Jean-Christophe},
  year = 2011,
  pages = {185--212},
  publisher = {Springer}
}

@article{daiRlinearConvergenceBarzilai2002,
  title = {R-Linear Convergence of the {{Barzilai}} and {{Borwein}} Gradient Method},
  author = {Dai, Yu-Hong and Liao, Li-Zhi},
  year = 2002,
  journal = {IMA Journal of numerical analysis},
  volume = {22},
  number = {1},
  pages = {1--10},
  publisher = {Oxford University Press}
}

@article{davis2019stochastic,
  title = {Stochastic Model-Based Minimization of Weakly Convex Functions},
  author = {Davis, Damek and Drusvyatskiy, Dmitriy},
  year = 2019,
  journal = {SIAM Journal on Optimization},
  volume = {29},
  number = {1},
  pages = {207--239},
  publisher = {SIAM}
}

@article{goldsteinCauchysMethodMinimization1962,
  title = {Cauchy's Method of Minimization},
  author = {Goldstein, A. A.},
  year = 1962,
  month = dec,
  journal = {Numerische Mathematik},
  volume = {4},
  number = {1},
  pages = {146--150},
  doi = {10.1007/BF01386306}
}

@article{grippoNonmonotoneLineSearch1986,
  title = {A Nonmonotone Line Search Technique for {{Newton}}'s Method},
  author = {Grippo, Luigi and Lampariello, Francesco and Lucidi, Stefano},
  year = 1986,
  journal = {SIAM Journal on Numerical Analysis},
  volume = {23},
  number = {4},
  pages = {707--716},
  publisher = {SIAM}
}

@article{harper2015movielens,
  title = {The Movielens Datasets: {{History}} and Context},
  author = {Harper, F. Maxwell and Konstan, Joseph A.},
  year = 2015,
  journal = {Acm Transactions on Interactive Intelligent Systems (tiis)},
  volume = {5},
  number = {4},
  pages = {1--19},
  publisher = {Acm New York, NY, USA}
}

@book{hastieStatisticalLearningSparsity2015,
  title = {Statistical Learning with Sparsity: The Lasso and Generalizations},
  author = {Hastie, Trevor and Tibshirani, Robert and Wainwright, Martin},
  series = {Monographs on Statistics and Applied Probability},
  volume = {143},
  year = {2015},
  publisher = {CRC Press},
  address = {Boca Raton, FL}
}

@article{hoaiaCompositeOptimizationModels2010,
  title = {Composite Optimization Models via Proximal Gradient Method with a Novel Enhanced Adaptive Stepsize},
  author = {Hoaia, Pham Thi and Thaia, Nguyen Pham Duy},
  year = 2010,
  journal = {Optimization Online, preprint, https://optimization-online. org/2010/08/2714}
}

@article{hoaiNovelStepsizeGradient2024,
  title = {A Novel Stepsize for Gradient Descent Method},
  author = {Hoai, Pham Thi and Vinh, Nguyen The and Chung, Nguyen Phung Hai},
  year = 2024,
  month = mar,
  journal = {Operations Research Letters},
  volume = {53},
  pages = {107072},
  doi = {10.1016/j.orl.2024.107072},
  urldate = {2026-01-23},
  langid = {english}
}

@article{lanOptimalParameterfreeGradient2026,
  title = {Optimal and Parameter-Free Gradient Minimization Methods for Convex and Nonconvex Optimization},
  author = {Lan, Guanghui and Ouyang, Yuyuan and Zhang, Zhe},
  year = 2026,
  journal = {Mathematical Programming},
  pages = {1--40},
  publisher = {Springer}
}

@article{lanProjectedGradientMethods2024a,
  title = {Projected Gradient Methods for Nonconvex and Stochastic Optimization: New Complexities and Auto-Conditioned Stepsizes},
  author = {Lan, Guanghui and Li, Tianjiao and Xu, Yangyang},
  year = 2024,
  journal = {arXiv preprint arXiv:2412.14291},
  eprint = {2412.14291},
  archiveprefix = {arXiv}
}

@article{latafatAdaptiveProximalAlgorithms2023,
  title = {Adaptive Proximal Algorithms for Convex Optimization under Local {{Lipschitz}} Continuity of the Gradient},
  author = {Latafat, Puya and Themelis, Andreas and Stella, Lorenzo and Patrinos, Panagiotis},
  year = 2023,
  month = jan,
  journal = {Mathematical Programming},
  volume = {213},
  number = {1-2},
  pages = {433--471},
  doi = {10.1007/s10107-024-02143-7},
  urldate = {2025-11-24},
  langid = {english}
}

@article{leeProximalNewtontypeMethods2014,
  title = {Proximal {{Newton-type}} Methods for Minimizing Composite Functions},
  author = {Lee, Jason D. and Sun, Yuekai and Saunders, Michael A.},
  year = 2014,
  journal = {SIAM Journal on Optimization},
  volume = {24},
  number = {3},
  pages = {1420--1443},
  publisher = {SIAM}
}

@article{lionsSplittingAlgorithmsSum1979,
  title = {Splitting Algorithms for the Sum of Two Nonlinear Operators},
  author = {Lions, Pierre-Louis and Mercier, Bertrand},
  year = 1979,
  journal = {SIAM Journal on Numerical Analysis},
  volume = {16},
  number = {6},
  pages = {964--979},
  publisher = {SIAM}
}

@article{liuNonmonotoneAcceleratedProximal2024,
  title = {A Nonmonotone Accelerated Proximal Gradient Method with Variable Stepsize Strategy for Nonsmooth and Nonconvex Minimization Problems},
  author = {Liu, Hongwei and Wang, Ting and Liu, Zexian},
  year = 2024,
  month = aug,
  journal = {Journal of Global Optimization},
  volume = {89},
  number = {4},
  pages = {863--897},
  doi = {10.1007/s10898-024-01366-4},
  urldate = {2025-12-30},
  langid = {english}
}

@article{malitskyAdaptiveGradientDescent2019a,
  title = {Adaptive Gradient Descent without Descent},
  author = {Malitsky, Yura and Mishchenko, Konstantin},
  year = 2019,
  journal = {arXiv preprint arXiv:1910.09529},
  eprint = {1910.09529},
  archiveprefix = {arXiv}
}

@article{malitskyAdaptiveProximalGradient2024a,
  title = {Adaptive Proximal Gradient Method for Convex Optimization},
  author = {Malitsky, Yura and Mishchenko, Konstantin},
  year = 2024,
  journal = {Advances in Neural Information Processing Systems},
  volume = {37},
  pages = {100670--100697}
}

@article{malitskyGoldenRatioAlgorithms2020,
  title = {Golden Ratio Algorithms for Variational Inequalities},
  author = {Malitsky, Yura},
  year = 2020,
  month = nov,
  journal = {Mathematical Programming},
  volume = {184},
  number = {1-2},
  pages = {383--410},
  doi = {10.1007/s10107-019-01416-w},
  urldate = {2026-02-04},
  langid = {english}
}

@article{malitskyProjectedReflectedGradient2015,
  title = {Projected {{Reflected Gradient Methods}} for {{Monotone Variational Inequalities}}},
  author = {Malitsky, Yura},
  year = 2015,
  month = jan,
  journal = {SIAM Journal on Optimization},
  volume = {25},
  number = {1},
  pages = {502--520},
  doi = {10.1137/14097238X},
  urldate = {2026-02-09}
}

@article{neal2011distributed,
  title={Distributed optimization and statistical learning via the alternating direction method of multipliers},
  author={Boyd, Stephen and Parikh, Neal and Chu, Eric and Peleato, Borja and Eckstein, Jonathan},
  journal={Foundations and Trends in Machine Learning},
  volume={3},
  number={1},
  pages={1--122},
  year={2011},
  publisher={Now Publishers, Inc.}
}

@article{nesterovGradientMethodsMinimizing2013,
  title = {Gradient Methods for Minimizing Composite Functions},
  author = {Nesterov, Yurii},
  year = 2013,
  journal = {Mathematical programming},
  volume = {140},
  number = {1},
  pages = {125--161},
  publisher = {Springer}
}

@book{nesterovIntroductoryLecturesConvex2004,
  title = {Introductory {{Lectures}} on {{Convex Optimization}}},
  author = {Nesterov, Yurii},
  year = 2004,
  series = {Applied {{Optimization}}},
  volume = {87},
  publisher = {Springer US},
  address = {Boston, MA},
  doi = {10.1007/978-1-4419-8853-9},
  urldate = {2026-03-31},
  copyright = {http://www.springer.com/tdm},
  langid = {english}
}

@article{nesterovSmoothMinimizationNonsmooth2005,
  title = {Smooth Minimization of Non-Smooth Functions},
  author = {Nesterov, Yurii},
  year = 2005,
  journal = {Mathematical programming},
  volume = {103},
  number = {1},
  pages = {127--152},
  publisher = {Springer}
}

@article{parikhProximalAlgorithms2014,
  title = {Proximal Algorithms},
  author = {Parikh, Neal and Boyd, Stephen},
  journal = {Foundations and Trends in Optimization},
  volume = {1},
  number = {3},
  pages = {127--239},
  year = {2014},
  publisher = {Now Publishers, Inc.}
}

@article{passtyErgodicConvergenceZero1979,
  title = {Ergodic Convergence to a Zero of the Sum of Monotone Operators in {{Hilbert}} Space},
  author = {Passty, Gregory B.},
  year = 1979,
  journal = {Journal of Mathematical Analysis and Applications},
  volume = {72},
  number = {2},
  pages = {383--390},
  publisher = {Elsevier}
}

@article{raydanBarzilaiBorweinChoice1993,
  title = {On the {{Barzilai}} and {{Borwein}} Choice of Steplength for the Gradient Method},
  author = {Raydan, Marcos},
  year = 1993,
  journal = {IMA Journal of Numerical Analysis},
  volume = {13},
  number = {3},
  pages = {321--326},
  publisher = {Oxford University Press}
}

@article{raydanBarzilaiBorweinGradient1997,
  title = {The {{Barzilai}} and {{Borwein}} Gradient Method for the Large Scale Unconstrained Minimization Problem},
  author = {Raydan, Marcos},
  year = 1997,
  journal = {SIAM Journal on Optimization},
  volume = {7},
  number = {1},
  pages = {26--33},
  publisher = {SIAM}
}

@article{rosenbrock1960automatic,
  title = {An Automatic Method for Finding the Greatest or Least Value of a Function},
  author = {Rosenbrock, Howard Harry},
  journal = {The Computer Journal},
  volume = {3},
  number = {3},
  pages = {175--184},
  year = {1960},
  publisher = {Oxford University Press}
}

@article{stellaForwardBackwardQuasiNewton2017,
  title = {Forward--Backward Quasi-{{Newton}} Methods for Nonsmooth Optimization Problems},
  author = {Stella, Lorenzo and Themelis, Andreas and Patrinos, Panagiotis},
  year = 2017,
  journal = {Computational Optimization and Applications},
  volume = {67},
  number = {3},
  pages = {443--487},
  publisher = {Springer}
}

@article{tibshiraniRegressionShrinkageSelection1996,
  title = {Regression Shrinkage and Selection via the Lasso},
  author = {Tibshirani, Robert},
  year = 1996,
  journal = {Journal of the Royal Statistical Society Series B: Statistical Methodology},
  volume = {58},
  number = {1},
  pages = {267--288},
  publisher = {Oxford University Press}
}

@article{wangAdaptiveLearningRate2023,
  title = {Adaptive Learning Rate Optimization Algorithms with Dynamic Bound Based on {{Barzilai-Borwein}} Method},
  author = {Wang, Zhi-Jun and Gao, He-Bei and Wang, Xiang-Hong and Zhao, Shuai-Ye and Li, Hong and Zhang, Xiao-Qin},
  year = 2023,
  month = jul,
  journal = {Information Sciences},
  volume = {634},
  pages = {42--54},
  doi = {10.1016/j.ins.2023.03.050},
  urldate = {2025-12-15},
  langid = {english},
}

@article{wenSurveyNonconvexRegularizationbased2018,
  title = {A Survey on Nonconvex Regularization-Based Sparse and Low-Rank Recovery in Signal Processing, Statistics, and Machine Learning},
  author = {Wen, Fei and Chu, Lei and Liu, Peilin and Qiu, Robert C.},
  year = 2018,
  journal = {IEEE Access},
  volume = {6},
  pages = {69883--69906},
  publisher = {IEEE}
}

@article{wrightSparseReconstructionSeparable2009,
  title = {Sparse Reconstruction by Separable Approximation},
  author = {Wright, Stephen J. and Nowak, Robert D. and Figueiredo, M{\'a}rio A.T.},
  year = 2009,
  journal = {IEEE Transactions on signal processing},
  volume = {57},
  number = {7},
  pages = {2479--2493},
  publisher = {IEEE}
}

@article{yagishitaSimpleLinesearchfreeFirstorder2025a,
  title = {Simple Linesearch-Free First-Order Methods for Nonconvex Optimization},
  author = {Yagishita, Shotaro and Ito, Masaru},
  year = 2025,
  journal = {arXiv preprint arXiv:2509.14670},
  eprint = {2509.14670},
  archiveprefix = {arXiv}
}

@article{yeSimpleAdaptiveProximal2025a,
  title = {A Simple Adaptive Proximal Gradient Method for Nonconvex Optimization},
  author = {Ye, Zilong and Ma, Shiqian and Yang, Junfeng and Zhou, Danqing},
  year = 2025,
  journal = {arXiv preprint arXiv:2510.06079},
  eprint = {2510.06079},
  archiveprefix = {arXiv}
}

@article{zhouAdaBBAdaptiveBarzilaiBorwein2024,
  title = {{{AdaBB}}: {{Adaptive Barzilai-Borwein Method}} for {{Convex Optimization}}},
  shorttitle = {{{AdaBB}}},
  author = {Zhou, Danqing and Ma, Shiqian and Yang, Junfeng},
  year = 2024,
  month = oct,
  journal = {Mathematics of Operations Research},
  pages = {moor.2024.0510},
  doi = {10.1287/moor.2024.0510},
  urldate = {2025-11-24},
  langid = {english}
}
\end{document}